\newtheorem{theorem}{Theorem}[section]
\newtheorem{example}[theorem]{Example}
\newtheorem{Lemma}[theorem]{Lemma}
\newtheorem{proposition}[theorem]{Proposition}
\theoremstyle{remark}
\newcommand{\Cal}[1]{{\mathcal #1}}
\newcommand{\End}{\operatorname{End}}
\newcommand{\Aut}{\operatorname{Aut}}
\newcommand{\Tor}{\operatorname{Tor}}
\newcommand{\Hom}{\operatorname{Hom}}
\newcommand{\Ext}{\operatorname{Ext}}
\newcommand{\Mod}{\operatorname{Mod-\!}}
\newcommand{\diag}{{\rm diag}}
\newcommand{\smod}{\mbox{\rm mod-}}
\newcommand{\lMod}{\mbox{\rm -Mod}}
\newcommand{\op}{\operatorname{op}}
\DeclareMathOperator{\GL}{GL}
\DeclareMathOperator{\rad}{rad}
\DeclareMathOperator{\Morph}{Morph}
\newcommand{\add}{\mbox{\rm add}}
\newcommand{\soc}{\mbox{\rm soc}}
\newcommand{\ann}{\mbox{\rm ann}}
\newcommand{\cmat}{\left(\begin{array}}
\newcommand{\fmat}{\end{array}\right)}
\DeclareMathOperator{\Ob}{Ob}
\newcommand{\codim}{\mbox{\rm codim}}
\newcommand{\coker}{\mbox{\rm coker}}
\newcommand{\Ker}{\mbox{\rm Ker}}
\newcommand{\Coker}{\mbox{\rm Coker}}
\newcommand{\N}{\mathbb N}
\newcommand{\Z}{\mathbb{Z}}
\newcommand{\Q}{\mathbb{Q}}
\newcommand{\M}[1]{\Morph(\Mod #1)}
\newcommand{\E}[1]{E_{#1}}
\begin{document}

\title[MORPHISMS WITH SEMILOCAL ENDOMORPHISM RINGS]{HOMOMORPHISMS WITH SEMILOCAL ENDOMORPHISM RINGS BETWEEN MODULES}
\author{Federico Campanini}
\address{Dipartimento di Matematica, Universit\`a di Padova, 35121 Padova, Italy}
\email{federico.campanini@math.unipd.it}
 \author[Susan F. El-Deken]{Susan F. El-Deken}
\address{Department of Mathematics, Faculty of Science, Helwan University, Ain Helwan, 11790, Helwan, Cairo, Egypt}
 \email{Sfdeken@hotmail.com}
 \author[Alberto Facchini]{Alberto Facchini}
\address{Dipartimento di Matematica, Universit\`a di Padova, 35121 Padova, Italy}
 \email{facchini@math.unipd.it}
\thanks{The first and the third authors were partially supported by Dipartimento di Matematica ``Tullio Levi-Civita'' of Universit\`a di Padova (Project BIRD163492/16 ``Categorical homological methods in the study of algebraic structures'' and Research program DOR1828909 ``Anelli e categorie di moduli'').}
\begin{abstract} We study the category $\Morph(\Mod R)$ whose objects are all morphisms between two right $R$-modules. The behavior of objects of $\M R$ whose endomorphism ring in $\Morph(\Mod R)$ is semilocal is very similar to the behavior of modules with a semilocal endomorphism ring. For instance, direct-sum decompositions of a direct sum $\oplus_{i=1}^nM_i$, that is, block-diagonal decompositions, where each object  $M_i$ of $\Morph(\Mod R)$ denotes a morphism $\mu_{M_i}\colon M_{0,i}\to M_{1,i}$ and where all the modules $M_{j,i}$ have a local endomorphism ring $\End(M_{j,i})$, depend on two invariants. This behavior is very similar to that of direct-sum decompositions of serial modules of finite Goldie dimension, which also depend on two invariants (monogeny class and epigeny class). When all the modules $M_{j,i}$ are uniserial modules, the direct-sum decompositions (block-diagonal decompositions) of a direct-sum $\oplus_{i=1}^nM_i$ depend on four invariants. 
\end{abstract}

  \keywords{Module morphism, Semilocal ring, Direct-sum decomposition. \\ \protect \indent 2010 {\it Mathematics Subject Classification.} Primary 16D70, 16L30, 16S50.}

 \maketitle

\section{Introduction}

The study of block decompositions of matrices is one of the classical themes in Linear Algebra. We refer to the description of matrices up to the matrix equivalence $\sim$ defined, for any two rectangular $m\times n$ matrices $A$ and $B$, by $A\sim B$  if $B = Q^{-1} A P$ for some invertible $n\times n$ matrix $P$ and some invertible $m\times m$ matrix $Q$. Recently, the case of matrices with entries in an arbitrary local ring has sparked interest \cite{KV}. In \cite[Corollary~5.4]{AAF1}, B.~Amini, A.~Amini and A.~Facchini considered the case of diagonal matrices over local rings, proving that the equivalence of two such matrices depends on two invariants, called lower part and epigeny class. That is, if $a_1,\dots,a_n,b_1,\dots,b_n$ are elements of a local ring $R$, then $\diag(a_1,\dots,a_n)\sim\diag(b_1,\dots,b_n)$
if and only if
there are two permutations $\sigma,\tau$ of $\{1,2,\dots,n\}$ such that the cyclically presented right $R$-modules 
$R/a_iR$ and $R/b_{\sigma(i)}R$ have the same lower part and $R/a_iR$, $R/b_{\tau(i)}R$ have the same epigeny class, for every $i=1,2,\dots,
n$. Thus the block decomposition of a matrix with entries in a ring is not unique, that is, the blocks on two equivalent block-diagonal matrices are not uniquely determined.

The modern setting to study this kind of questions is considering the morphisms in the category $\Mod R$ of right modules over a ring $R$, which are the objects of a Grothendieck category $\Morph(\Mod R)$. More precisely, the objects of the
category $\Morph(\Mod R)$ are the $R$-module morphisms between right $R$-modules. We will
denote by $M$ the object $\mu_M\colon M_0\to M_1$. A morphism $u\colon M\to N$ in the category $\Morph(\Mod R)$ is a pair of $R$-module morphisms $(u_0,u_1)$ such that $u_1\mu_M=\mu_Nu_0$. Thus two objects $M,N$ of $\Morph(\Mod R)$ are isomorphic if and only if there exists a pair of $R$-module isomorphisms $u_0\colon M_0\to N_0$ and $u_1\colon M_1\to N_1$ such that $u_1\mu_M=\mu_Nu_0$. This is exactly the equivalence $\sim$ defined above by the formula $B = Q^{-1} A P$. For instance, in \cite{AAF1}, the third-named author considered the case of isomorphism of two objects $\oplus_{i=1}^nM_i\cong \oplus_{i=1}^nN_i$, where each $M_i$ is the left multiplication $\lambda_{a_i}\colon R_R\to R_R$ by $a_i \in R$ and each $N_j$ is the left multiplication $\lambda_{b_j}\colon R_R\to R_R$ by $b_j\in R$.

Now direct-sum decompositions of objects with a semilocal endomorphism ring follow particularly regular patterns. Thus, in this paper, we consider
the morphisms $\mu_M\colon M_0\to M_1$ whose endomorphism ring $\End_{\Morph(\Mod R)}(M)$ in the category $\Morph(\Mod R)$ is
semilocal. For instance, if $M_0, M_1$ are right $R$-modules with semilocal endomorphism rings in the category $\Mod R$, then all morphisms $\mu_M\colon M_0\to M_1$ have a semilocal endomorphism ring (Proposition~\ref{1}).

The content of the paper is as follows. Sections \ref{2} and~\ref{3} are devoted to the study of the basic properties of the category $\Morph(\Mod R)$. In particular, in Section~\ref{3} we consider some functors clearly related to morphisms, like domain, codomain, kernel and cokernel, and other functors linked with them (Propositions~\ref{3.2} and~\ref{3.5}). Direct-sum decompositions of an object $A$ of an additive category $\Cal A$ with splitting idempotents are described by a monoid $V(A)$ with order-unit, and when the endomorphism ring of $A$ is semilocal, the commutative monoid $V(A)$ turns out to be a Krull monoid. In Theorem~\ref{action}, we describe the relation between the monoids $V(M)$, $V(M_0)$ and $V(M_1)$. In Section~\ref{5}, we study morphisms whose endomorphism ring in $\Morph(\Mod R)$ is a ring of finite type, that is, is a ring that modulo its Jacobson radical is a direct product of finitely many division rings, and the morphisms whose endomorphism ring in $\Morph(\Mod R)$ is local (Theorem~\ref{5.3}). In Section~\ref{vil}, we consider morphisms between two modules $M_0$, $M_1$ with $\End(M_0)$ and $\End(M_1)$ local rings. The direct sums of these morphisms are described by two invariants, which we call domain class and codomain class (Theorem~\ref{6.1'}). We give an example of a direct sum of $n$ such morphisms with $n!$ pairwise non-isomorphic direct-sum decompositions. The case of morphisms between uniserial modules is treated in Section~\ref{7}. Endomorphism rings of uniserial modules have at most two maximal ideals, so that the endomorphism ring of a morphism between two uniserial modules has at most four maximal ideals. Thus finite direct-sums of morphisms between uniserial modules are described by four invariants (Theorem~\ref{7.3}).

In this paper all rings have an identity $1\neq 0$ and ring morphisms preserve $1$.
\section{The category $\Morph(\Mod R)$}\label{2}
		
Let $R$ be an associative ring with identity and $\Mod R$ the category of right $R$-modules.  Let $\Morph(\Mod R)$ denote the {\em morphism category}. The objects of this
category are the $R$-module morphisms between right $R$-modules. We will
denote by $M$ a generic object $\mu_M\colon M_0\to M_1$ of $\Morph(\Mod R)$. A morphism $u\colon M\to N$ in the category $\Morph(\Mod R)$ is a pair of $R$-module morphisms $(u_0,u_1)$ such that $u_1\mu_M=\mu_Nu_0$, that is, such that the diagram 
$$\xymatrix{
M_0 \ar[r]^{\mu_M} \ar[d]_{u_0} & M_1\ar[d]^{u_1} \\
N_0 \ar[r]^{\mu_N}  & N_1
 } $$ commutes.

We will denote by $\E M$ the endomorphism ring of the object $\mu_M\colon M_0\to M_1$ in the category $\M R$.

Let us examine the structure of the category $\Morph(\Mod R)$ more in detail. For every pair $M,N$ of objects of $\Morph(\Mod R)$, the group $$\Hom_{\Morph(\Mod R)}(M,N)$$ is a subgroup of the cartesian product $\Hom_{\Mod R}(M_0, N_0)\times \Hom_{\Mod R}(M_1, N_1)$.  
Thus, for every pair $M,N$ of objects of $\Morph(\Mod R)$,  addition is defined on each additive abelian group $\Hom_{\Morph(\Mod R)}(M,N),$ and we can set $u+u':=(u_0+u'_0, u_1+u'_1)$ for every $u=(u_0,u_1),u'=(u'_0,u'_1)\in \Hom_{\Morph(\Mod R)}(M,N)$.

The next theorem is well known \cite{Fossum, Green}. Since in those two references the result is stated for left $R$-modules and we need it for right ones, we briefly sketch some steps of the proof for later references.

\begin{theorem}\label{2.1} The category $\Morph(\Mod R)$ is equivalent to the category of right modules over the triangular matrix ring $T:=\left(\begin{smallmatrix}R&R\\ 0&R\end{smallmatrix}\right)$.\end{theorem}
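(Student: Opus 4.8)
The plan is to construct an explicit equivalence of categories by exhibiting a functor in each direction and showing they are mutually inverse.
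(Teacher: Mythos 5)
Your proposal states a strategy but contains no mathematical content: ``exhibit a functor in each direction and show they are mutually inverse'' is precisely what any proof of an equivalence must do, so announcing it does not advance the argument. The entire substance of the theorem lies in the two constructions you have not supplied. Concretely, you must say how an object $\mu_M\colon M_0\to M_1$ becomes a right $T$-module: the underlying group is $M_0\oplus M_1$, and the key point --- the only place where the morphism $\mu_M$ enters --- is that the off-diagonal entry of a matrix $\left(\begin{smallmatrix}r&s\\0&t\end{smallmatrix}\right)$ acts on the $M_0$-component through $\mu_M$, i.e.\ $(m_0,m_1)\cdot\left(\begin{smallmatrix}r&s\\0&t\end{smallmatrix}\right)=(m_0r,\,\mu_M(m_0)s+m_1t)$. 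Without specifying this action there is no functor at all, and one must also check it is a genuine right module structure (equivalently, that the assignment $T\to\End_{\Z}(M_0\oplus M_1)$ is a ring antihomomorphism).

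In the other direction, the missing idea is the Peirce decomposition: given a right $T$-module $M$, the idempotents $e_{11},e_{22}$ yield $M=Me_{11}\oplus Me_{22}$, each summand is an $R$-module via the diagonal embedding $R\to T$, and right multiplication by $e_{12}$ restricts (because $e_{11}e_{12}e_{22}=e_{12}$) to an $R$-linear map $Me_{11}\to Me_{22}$, which is the object of $\Morph(\Mod R)$ you assign to $M$. You would then need to verify that every $T$-linear map preserves the Peirce summands, hence is diagonal of the form $\left(\begin{smallmatrix}u_0&0\\0&u_1\end{smallmatrix}\right)$, and that $T$-linearity of such a map is exactly the commutativity condition $u_1\mu_M=\mu_N u_0$; this is where morphisms of the two categories are matched, and it is not automatic. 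Finally, mutual inverseness (up to natural isomorphism) must be checked on both objects and morphisms. Your plan coincides with the paper's approach in outline, but as written it omits every one of these steps, so it does not constitute a proof.
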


\begin{proof}
The equivalence $F\colon \Morph(\Mod R)\to\Mod T$ is defined as follows. Given any object $M$ in $ \Morph(\Mod R)$, that is, a right $R$-module homomorphism $$\mu_M\colon M_0\to M_1,$$ consider the abelian group $M_0\oplus M_1$. The right $T$-module structure on $M_0\oplus M_1$ is given by the ring antihomomorphism $$\rho\colon T \to\End_\Z(M_0\oplus M_1)\cong \left(\begin{array}{cc}\End_\Z(M_0) & \Hom_\Z(M_1,M_0) \\ \Hom_\Z(M_0,M_1) & \End_\Z(M_1)\end{array}\right),$$ $$\rho\colon \left(\begin{array}{cc}r&s\\0&t\end{array}\right)\mapsto \left(\begin{array}{cc}\rho_r&0\\\rho_s\circ\mu_M&\rho_t\end{array}\right).$$ The functor $F$ assigns to each morphism $u=(u_0,u_1)\colon M\to N$ in $\Morph(\Mod R)$ the right $T$-module morphism $\left(\begin{smallmatrix}u_0&0\\0&u_1 \end{smallmatrix}\right)\colon M_0\oplus M_1\to N_0\oplus N_1$.

The quasi-inverse of $F$ is the functor $G\colon \Mod T \rightarrow \Morph(\Mod R)$ that associates to a right $T$-module $M_T$, that is, to a ring antihomomorphism $\rho\colon T\to\End_\Z(M)$, the morphism $\mu_M\colon Me_{11}\to Me_{22}$, as follows. Here $e_{ij}$ ($i,j=1,2$) is the matrix with $1$ in the $(i,j)$-entry and $0$ elsewhere. Right multiplication by $e_{11}$ is a group morphism $M\to M$, $m \mapsto me_{11}$, which is clearly an idempotent group morphism. Thus we have a direct-sum decomposition $M=Me_{11}\oplus Me_{22}$ of $M$ as an abelian group. Since there is a canonical ring homomorphism $R\to T$, $r\mapsto \left(\begin{smallmatrix}r&0\\0&r \end{smallmatrix}\right)$, every right $T$-module is a right $R$-module in a canonical way. From the equalities $\left(\begin{smallmatrix}r&0\\0&r \end{smallmatrix}\right)\left(\begin{smallmatrix}1&0\\0&0 \end{smallmatrix}\right)=\left(\begin{smallmatrix}1&0\\0&0 \end{smallmatrix}\right)\left(\begin{smallmatrix}r&0\\0&r \end{smallmatrix}\right)$, it follows that $Me_{11}$ is an $R$-submodule of $M_R$, and so is $Me_{22}$. Thus $M=Me_{11}\oplus Me_{22}$  is a direct-sum decomposition of $M_R$ as a right $R$-module. From the identity $e_{11}e_{12}e_{22}=e_{12}$, we get that $\rho(e_{22})\circ \rho(e_{12})\circ \rho(e_{11})= \rho(e_{12})$, so that $\rho(e_{12})(Me_{11})\subseteq Me_{22}$. Right multiplication $\rho(e_{12})\colon M\to M$ induces by restriction a group morphism $\rho(e_{12})|_{Me_{11}}^{Me_{22}}\colon Me_{11}\to Me_{22}$. From the equalities $\left(\begin{smallmatrix}r&0\\0&r \end{smallmatrix}\right)\left(\begin{smallmatrix}0&1\\0&0 \end{smallmatrix}\right)=\left(\begin{smallmatrix}0&1\\0&0 \end{smallmatrix}\right)\left(\begin{smallmatrix}r&0\\0&r \end{smallmatrix}\right)$, we have that $\rho(e_{12})\colon M\to M$ is a right $R$-module morphism. Thus $\mu_M:=\rho(e_{12})|_{Me_{11}}^{Me_{22}}\colon Me_{11}\to Me_{22},$ is an object of $\Morph(\Mod R)$, corresponding to the right $T$-module $M_T$. Every right $T$-module morphism $\alpha\colon M\to N$ is such that $\alpha(Me_{11})\subseteq Me_{11}$ and $\alpha(Me_{22})\subseteq Me_{22}$, and therefore, it is in matrix form of the type $\alpha=\left(\begin{smallmatrix}u_0&0\\0&u_1 \end{smallmatrix}\right)\colon Me_{11}\oplus Me_{22}\to Ne_{11}\oplus Ne_{22}$. The functor $G$ associates to $\alpha$ the morphism $(u_0,u_1)$ in $\Morph(\Mod R)$.
\end{proof}

By Theorem~\ref{2.1}, the category $\Morph(\Mod R)$ is a Grothendieck category.

\bigskip

Let $\{\,M_\lambda\mid \lambda\in \Lambda\,\}$ be a family of objects of $\Morph(\Mod R)$, where $\lambda$ ranges in an index set $\Lambda$. Thus $M_\lambda$ is an object $\mu_{M_\lambda}\colon M_{0,\lambda}\to M_{1,\lambda}$ for every $\lambda\in\Lambda$. The coproduct of the family $\{\,M_\lambda\mid \lambda\in \Lambda\,\}$ is the object $\bigoplus_{\lambda\in\Lambda} M_{\lambda}$, where $$\mu_{\bigoplus_{\lambda\in\Lambda} M_{\lambda}}\colon \bigoplus_{\lambda\in\Lambda} M_{0,\lambda}\to \bigoplus_{\lambda\in\Lambda} M_{1,\lambda}$$ is defined componentwise, with the canonical embeddings $e_{\lambda_0}\colon M_{\lambda_0}\to \bigoplus_{\lambda\in\Lambda} M_{\lambda} $ for every $\lambda_0\in\Lambda$.

The product of the family $\{\,M_\lambda\mid \lambda\in \Lambda\,\}$ is the object $\prod_{\lambda\in\Lambda} M_{\lambda}$, where $$\mu_{\prod_{\lambda\in\Lambda} M_{\lambda}}\colon \prod_{\lambda\in\Lambda} M_{0,\lambda}\to \prod_{\lambda\in\Lambda} M_{1,\lambda}$$ is defined componentwise, with the canonical projections $p_{\lambda_0}\colon \prod_{\lambda\in\Lambda} M_{\lambda} \to  M_{\lambda_0}$ for every $\lambda_0\in\Lambda$.

\medskip

Let us briefly consider the kernel and the cokernel of a morphism $$u=(u_0,u_1)\colon M\to N$$ in the category $\Morph(\Mod R)$. Clearly, the morphism $u$ induces a commutative diagram of right $R$-modules and right $R$-module morphisms
\[
\xymatrix{
0 \ar[r] &\ker(u_0)\ar[r]^{\varepsilon_0} \ar[d]_{\mu_M|}  & M_0 \ar[r]^{u_0}  \ar[d]_{\mu_M} & N_0 \ar[r]^{p_0}  \ar[d]^{\mu_N} & \coker(u_0)\ar[r]\ar[d]^{\overline{\mu_N}}  & 0 \\
0 \ar[r] &\ker(u_1) \ar[r]_{\varepsilon_1}& M_1 \ar[r]_{u_1}& N_1 \ar[r]_{p_1}& \coker(u_1) \ar[r]& 0.}
\]
The kernel of $u$ is the object $\mu_M|\colon\ker(u_0)\to\ker(u_1)$, where $\mu_M|$ denotes the restriction of $\mu_M\colon M_0\to M_1$ to the kernels, with the inclusion $\varepsilon=(\varepsilon_0,\varepsilon_1)$. The cokernel of $u$ is the object $\overline{\mu_N}\colon\coker(u_0)\to\coker(u_1)$, where $\overline{\mu_N}$ denotes the right $R$-module morphism induced by $\mu_N\colon N_0\to N_1$ on the cokernels, with the projection $p=(p_0,p_1)$.

\section{Some canonical functors}\label{3}

For any ring $R$, there are four canonical covariant additive functors  $$\Morph(\Mod R)\to \Mod R.$$ They are:

\smallskip

(1) The domain functor $D\colon \Morph(\Mod R)\to \Mod R$, which associates to each object $M$ of $\Morph(\Mod R)$ the right $R$-module $M_0$ and to any morphism $u=(u_0,u_1)$ in $\Morph(\Mod R)$ the right $R$-module morphism $u_0$ in $\Mod R$.

(2) The codomain functor $C\colon \Morph(\Mod R)\to \Mod R$, which associates to each object $M$ of $\Morph(\Mod R)$ the right $R$-module $M_1$ and to any morphism $u=(u_0,u_1)$ the right $R$-module morphism $u_1$.

(3) The kernel functor $\Ker\colon \Morph(\Mod R)\to \Mod R$, which associates to each object $M$ of $\Morph(\Mod R)$ the right $R$-module $\ker(\mu_M)$ and to any morphism $u=(u_0,u_1)\colon M\to N$ the restriction of the morphism $u_0\colon M_0\to N_0$, obtained by restricting the domain of $u_0$ to $\ker(\mu_M)$ and the codomain to $\ker(\mu_N)$.

(4) The cokernel functor $\Coker\colon \Morph(\Mod R)\to \Mod R$, which associates to each object $M$ of $\Morph(\Mod R)$ the right $R$-module $\coker(\mu_M)$ and to any morphism $u=(u_0,u_1)\colon M\to N$ the right $R$-module morphism induced by the morphism $u_1\colon M_1\to N_1$ on the cokernels $\coker(\mu_M)$ and $\coker(\mu_N)$.

\medskip

For any ring $R$, the canonical functor $$U\colon \Morph(\Mod R)\to \Mod R\times \Mod R,$$ which assigns to every object $M$ of $ \Morph(\Mod R)$ the object $(M_0,M_1)$ of $\Mod R\times \Mod R$ and to every morphism $u=(u_0,u_1)$ in $\Morph(\Mod R)$ the morphism $(u_0,u_1)$ in $\Mod R\times \Mod R$, is simply the product functor $D\times C$.

In terms of the categorical equivalence between the categories $\Morph(\Mod R)$ and $\Mod T$ (see Theorem~\ref{2.1} and its proof), we have that $D,C$ and $U$ assign to every right $T$-module $M_T$ the right $R$-modules $Me_{11}$, $Me_{22}$, and the pair of right $R$-modules
$(Me_{11},Me_{22})$, respectively. Thus $D$ can be identified with (that is, is naturally isomorphic to) the functor $-\otimes_TTe_{11}\colon\Mod T\to\Mod R$ and $C$ can be identified with the functor $-\otimes_TTe_{22}\colon\Mod T\to\Mod R$.

\medskip

We now use a terminology that can be found, for instance, in \cite[Section~2]{FP}. Recall that a ring morphism $\varphi\colon R\rightarrow S$ is  {\em local} if, for every $r\in R$, $\varphi(r)$ invertible in $S$ implies $r$ invertible in $R$ \cite{Camps 1993}.
If $\Cal A$ and $\Cal B$ are preadditive categories and $F\colon\Cal A\to\Cal B$ is an additive functor, the functor $F$ is {\em local} if, for every pair $A,A'$ of objects of $\Cal A$ and every morphism $f\colon A\to A'$ in $\Cal A$, $F(f)$ isomorphism in $\Cal B$ implies $f$ isomorphism in $\Cal A$, and $F$ is {\em isomorphism reflecting} if, for every pair $A,A'$ of objects of $\Cal A$, $F(A)\cong F(A')$ implies $A\cong A'$.
The functor $U\colon \Morph(\Mod R)\to \Mod R\times \Mod R$ is a faithful local functor that is not isomorphism reflecting. For instance, for every non-zero object $A_R$ of $\Mod R$, the identity $A_R\to A_R$ and the zero morphism $A_R\to A_R$ are two non-isomorphic objects of $\Morph(\Mod R)$ that become isomorphic objects of $\Mod R\times \Mod R$ when $U$ is applied. Notice that, via the faithful functor $U$, the category $\Morph(\Mod R)$ can be viewed as a subcategory of $\Mod R\times \Mod R$.

\medskip

Now let $I$ be the ideal of $T$ consisting of all the matrices $$\left(\begin{array}{cc}0&a\\ 0&0\end{array}\right)\in T,\qquad a\in R.$$

\begin{Lemma}\label{XXX}
The ideal $I$ is a two-sided ideal of $T$, nilpotent of index $2$, hence contained in the Jacobson radical $J(T)$ of $T$. Moreover, $T/I$ is isomorphic, as a ring, to the direct product $R\times R$, $_TI=Te_{12}\cong Te_{11}$ is a cyclic projective left $T$-module, and $I_R\cong R_R$ is a free right $R$-module.
\end{Lemma}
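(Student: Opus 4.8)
The plan is to verify each assertion in the statement directly from the matrix structure of $T=\left(\begin{smallmatrix}R&R\\ 0&R\end{smallmatrix}\right)$, since $I=Re_{12}$ is a very concrete set. First I would check that $I$ is a two-sided ideal by multiplying a matrix $\left(\begin{smallmatrix}0&a\\ 0&0\end{smallmatrix}\right)$ on the left and on the right by a general element $\left(\begin{smallmatrix}r&s\\ 0&t\end{smallmatrix}\right)$ of $T$ and observing that both products land back in $I$ (on the left one gets $\left(\begin{smallmatrix}0&ra\\ 0&0\end{smallmatrix}\right)$, on the right $\left(\begin{smallmatrix}0&at\\ 0&0\end{smallmatrix}\right)$). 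The nilpotency of index $2$ is immediate: the product of two elements of $I$ is $\left(\begin{smallmatrix}0&a\\ 0&0\end{smallmatrix}\right)\left(\begin{smallmatrix}0&b\\ 0&0\end{smallmatrix}\right)=0$, so $I^2=0$ while $I\neq 0$, hence $I$ is nilpotent and therefore contained in $J(T)$ (a standard fact: a nil two-sided ideal lies in the Jacobson radical).

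Next I would establish the ring isomorphism $T/I\cong R\times R$. The natural candidate is the map sending $\left(\begin{smallmatrix}r&s\\ 0&t\end{smallmatrix}\right)$ to the diagonal entries $(r,t)$; I would check this is a surjective ring homomorphism with kernel exactly $I$, and then invoke the first isomorphism theorem. For the left-module claims, I would note that $Te_{12}$ consists of the matrices $\left(\begin{smallmatrix}0&a\\ 0&0\end{smallmatrix}\right)$ obtained by right-multiplying $T$ by $e_{12}$, so $Te_{12}=I$ as a set, giving ${}_TI=Te_{12}$. The isomorphism $Te_{12}\cong Te_{11}$ as left $T$-modules follows because right multiplication by $e_{12}$ maps $Te_{11}$ onto $Te_{12}$, and one checks this is a left-$T$-linear bijection (using $e_{11}e_{12}=e_{12}$ and that $e_{12}$ has a one-sided inverse behavior on the relevant column). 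Since $T=Te_{11}\oplus Te_{22}$ as left $T$-modules with $e_{11}+e_{22}=1$ the identity idempotent decomposition, $Te_{11}$ is a direct summand of $T$, hence cyclic (generated by $e_{11}$) and projective; transporting along the isomorphism shows ${}_TI$ is cyclic projective.

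Finally, for the right $R$-module structure, I would observe that the canonical ring homomorphism $R\to T$, $r\mapsto \left(\begin{smallmatrix}r&0\\ 0&r\end{smallmatrix}\right)$, makes $I$ a right $R$-module, and that $I_R$ is carried isomorphically onto $R_R$ by the map $\left(\begin{smallmatrix}0&a\\ 0&0\end{smallmatrix}\right)\mapsto a$; I would verify this respects the right $R$-action (right multiplication by $\left(\begin{smallmatrix}r&0\\ 0&r\end{smallmatrix}\right)$ sends $a$ to $ar$) and is bijective, so $I_R\cong R_R$ is free of rank one.

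I do not anticipate a serious obstacle here, as the statement is essentially a bookkeeping computation with the four matrix units. The one point requiring a little care is the left-$T$-module isomorphism $Te_{12}\cong Te_{11}$: I would make sure the map is genuinely left-$T$-linear and bijective rather than merely an abelian-group isomorphism, and confirm that the cyclic projective conclusion is then inherited correctly through the isomorphism from the summand $Te_{11}$ of ${}_TT$.
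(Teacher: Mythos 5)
Your proposal is correct and follows essentially the same route as the paper, whose entire proof consists of exhibiting the isomorphism $Te_{12}\to Te_{11}$, $\left(\begin{smallmatrix}0&a\\0&0\end{smallmatrix}\right)\mapsto\left(\begin{smallmatrix}a&0\\0&0\end{smallmatrix}\right)$, as right multiplication by $e_{21}$ (the inverse of your right multiplication by $e_{12}$), leaving the remaining matrix-unit bookkeeping to the reader. Your write-up simply makes those routine verifications explicit, and your one point of care — genuine left-$T$-linearity and bijectivity of the map between $Te_{11}$ and $Te_{12}$ — is exactly the step the paper records.
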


In the statement of Lemma~\ref{XXX}, we look at the right $T$-module $I_T$ as a right $R$-module $I_R$ via the canonical embedding $R\to T$, $r\mapsto \binom{r\ 0}{0\ r}$.

\begin{proof}
The left $T$-module isomorphism $_TI=Te_{12}\to Te_{11}$ associates to the matrix $\binom{0\ a}{0\ 0}\in I$ the matrix $\binom{a\ 0}{0\ 0}\in Te_{11}$. It is given by right multiplication by $e_{21}$.
\end{proof}

Let us compare the functors $D,C,\Ker,\Coker\colon \Morph(\Mod R)\to \Mod R$ defined above with the derived functors of the functor $-\otimes_TT/I\colon \Mod T\to\Mod T/I$. From Lemma~\ref{XXX}, we have that $\Tor_n^T(-, {}_TT/I)=0$ for every $n\ge2$. In the next proposition we compute $-\otimes_TT/I$ and $\Tor^T_1(-, {}_TT/I)$. By Theorem~\ref{2.1}, we will identify the two equivalent categories $\Morph(\Mod R)$ and $\Mod T$.

\begin{proposition}\label{3.2}
{\rm (a)} The functor $$-\otimes_TT/I\colon \Mod T\to\Mod T/I\cong\Mod R\times\Mod R$$ is naturally isomorphic to the functor $D\times\Coker\colon \Mod T\to\Mod R\times\Mod R$.

{\rm (b)} The functor $-\otimes_TI_R\colon \Mod T\to\Mod R$ is naturally isomorphic to the functor $D\colon \Mod T\to\Mod R$.

{\rm (c)} The functor $\Tor_1^T(-, {}_TT/I_R)\colon \Mod T\to\Mod R$ is naturally isomorphic to the functor $\Ker\colon \Mod T\to\Mod R$.
\end{proposition}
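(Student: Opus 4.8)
The plan is to deduce all three statements at once from the short exact sequence of left $T$-modules $0\to {}_TI\to {}_TT\to {}_TT/I\to 0$. Applying $M\otimes_T-$ to a right $T$-module $M$ and using that $_TT$ is projective (so $\Tor_1^T(M,{}_TT)=0$) together with the canonical isomorphism $M\otimes_T T\cong M$, the long exact sequence in $\Tor$ — whose higher terms vanish by Lemma~\ref{XXX} — collapses to
$$0\to \Tor_1^T(M,{}_TT/I)\to M\otimes_T I\to M\to M\otimes_T T/I\to 0.$$
Since tensor products and this exact sequence are functorial in $M$, every isomorphism produced below will automatically be natural; it remains only to identify the three non-obvious terms together with the middle map. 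I would reorganise the proof so as to establish (b) first, then (c), and finally (a).

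First I would prove (b) by computing $M\otimes_T I$. By Lemma~\ref{XXX}, right multiplication by $e_{21}$ is an isomorphism $_TI=Te_{12}\to Te_{11}$ of left $T$-modules, and for the idempotent $e_{11}$ one has the standard identification $M\otimes_T Te_{11}\cong Me_{11}$, $m\otimes te_{11}\mapsto mte_{11}$. Composing these gives $M\otimes_T I\cong Me_{11}=D(M)$. The only point needing care is that this additive isomorphism is $R$-linear for the right $R$-module structure that $I_R\cong R_R$ places on $M\otimes_T I$, which is a direct check using the embedding $r\mapsto\left(\begin{smallmatrix}r&0\\0&r\end{smallmatrix}\right)$; this proves (b).

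The key step is to identify, under $M\otimes_T I\cong Me_{11}$, the middle map $M\otimes_T I\to M\otimes_T T\cong M$ induced by the inclusion $I\hookrightarrow T$. Tracing $m\otimes e_{12}$ through the two identifications of (b) shows that it corresponds to $me_{11}\in Me_{11}$, while under the inclusion it is sent to $me_{12}=\mu_M(me_{11})\in Me_{22}$. Hence, after the decomposition $M=Me_{11}\oplus Me_{22}$, the middle map is exactly $D(M)\to M$, $x\mapsto(0,\mu_M(x))$; that is, it is $\mu_M$ followed by the inclusion $Me_{22}\hookrightarrow M$.

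With this identification the remaining statements are immediate. The kernel of $x\mapsto(0,\mu_M(x))$ is $\ker(\mu_M)=\Ker(M)$, so exactness gives $\Tor_1^T(M,{}_TT/I)\cong\Ker(M)$, proving (c). Its cokernel is $Me_{11}\oplus\bigl(Me_{22}/\mu_M(Me_{11})\bigr)=D(M)\oplus\Coker(M)$, and this cokernel is $M\otimes_T T/I$; matching the two summands with the two factors under $T/I\cong R\times R$ and $\Mod T/I\cong\Mod R\times\Mod R$ yields $M\otimes_T T/I\cong D(M)\times\Coker(M)$, proving (a). (Alternatively one can compute $M\otimes_T T/I\cong M/MI$ directly and verify $MI=\im(\mu_M)\subseteq Me_{22}$.) I expect the main obstacle to be bookkeeping rather than conceptual: keeping the left- and right-module structures straight while tracing $I\hookrightarrow T$ through the two idempotent identifications, and checking that the resulting isomorphisms are $R$-linear (respectively $(R\times R)$-linear in (a)) and not merely additive. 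Once the middle map is correctly pinned down as $\mu_M$, parts (a) and (c) are nothing more than its cokernel and kernel.
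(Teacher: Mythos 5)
Your proof is correct and is essentially the computation the paper has in mind: the proof is omitted there as a ``standard elementary calculation,'' and the four-term sequence you derive from $0\to{}_TI\to{}_TT\to{}_TT/I\to0$, with the middle map pinned down as $x\mapsto(0,\mu_M(x))$, is precisely the displayed sequence $0\to\ker\mu_M\to M_0\to M_0\oplus M_1\to M_0\oplus\coker\mu_M\to 0$ that the paper records immediately after Proposition~\ref{3.2}. Your key identifications all check out --- $M\otimes_TI\cong M\otimes_TTe_{11}\cong Me_{11}=D(M)$ via the isomorphism $_TI=Te_{12}\cong Te_{11}$ of Lemma~\ref{XXX}, the trace of $m\otimes e_{12}$ to $me_{12}=\mu_M(me_{11})\in Me_{22}$, and $MI=\im\mu_M$ --- so nothing further is needed.
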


We omit the proof, which is a standard elementary calculation.

\medskip

Via Proposition~\ref{3.2}, the exact sequence \begin{equation}\label{bjip'}
\xymatrix@1{ 0\ar[r]& \Tor_1^T(M, T/I_R)\ar[r]& M\otimes_TI\ar[r] & M\ar[r] &M/MI\ar[r] & 0}
\end{equation}
becomes, for every object $M$ of $\Morph(\Mod R)$, the exact sequence
\begin{equation*}\label{bjip}
\xymatrix@1{ 0\ar[r]& \ker\mu_M\ar[r]& M_0\ar[r]^{\binom{0}{\mu_M}\ \ \ \ } & M_0\oplus M_1\ar[r]&M_0\oplus\coker\,\mu_M\ar[r] & 0.}
\end{equation*}

By Lemma \ref{XXX}, the left $T$-module $T/I$ has projective dimension $\le1$. We have a canonical short exact sequence \begin{equation}
\xymatrix@1{ 0\ar[r]& I\ar[r]& T\ar[r] & T/I \ar[r] & 0} \label{MMM}
\end{equation} of $T$-$R$-bimodules.

\begin{Lemma}\label{3.3} The short exact sequence {\rm  (\ref{MMM})} of left $T$-modules does not split. In particular, the left $T$-module $_TT/I$ has projective dimension $1$.\end{Lemma}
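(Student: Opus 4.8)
The plan is to show that the epimorphism $\pi\colon T\to T/I$ in (\ref{MMM}) admits no left $T$-module section, equivalently that $I$ is not a direct summand of the left $T$-module ${}_TT$, and then to read off the projective dimension.

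First I would translate ``$I$ is a direct summand of ${}_TT$'' into the language of idempotents. Since every left $T$-linear endomorphism $f$ of ${}_TT$ satisfies $f(t)=tf(1)$, it is right multiplication by $f(1)$; hence $\End({}_TT)\cong T^{\op}$, and the idempotent endomorphisms correspond precisely to the idempotents $e=e^2\in T$, the associated summand being the image $Te$. Thus, if (\ref{MMM}) split, the inclusion $I\hookrightarrow T$ would exhibit $I$ as the image of an idempotent endomorphism of ${}_TT$, so $I=Te$ for some idempotent $e\in T$.

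Next I would derive a contradiction from $I=Te$. As $e=1\cdot e\in Te=I$, the idempotent $e$ lies in $I$; but by Lemma~\ref{XXX} the ideal $I$ is nilpotent of index $2$, so $e=e^2\in I^2=0$, forcing $e=0$ and $I=Te=0$. This contradicts $I\neq0$ (which holds since $R\neq 0$), so (\ref{MMM}) does not split. Equivalently, and more concretely, any candidate section would send the generator $\bar 1=1+I$ of ${}_T(T/I)$ to some $t_0\in 1+I$; such a $t_0$ is a unit of $T$ (its inverse being $1$ minus its nilpotent part), while left $T$-linearity forces $It_0=0$, again yielding $I=0$.

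Finally, for the statement about projective dimension I would invoke Lemma~\ref{XXX} once more: there $I={}_TTe_{12}\cong {}_TTe_{11}$ is projective, so (\ref{MMM}) is a projective resolution of ${}_TT/I$ of length $\le1$, giving $\pd({}_TT/I)\le1$. If this dimension were $0$, then ${}_TT/I$ would be projective and the epimorphism $\pi$ would split, contradicting the previous paragraph. Hence $\pd({}_TT/I)=1$. The only point requiring care is the idempotent reformulation of a direct summand of ${}_TT$; once that is in place the argument is immediate, the essential input being that the square-zero ideal $I$ contains no nonzero idempotent.
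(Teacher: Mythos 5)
Your proof is correct and essentially the paper's own: the paper argues that a section must be right multiplication by some $t\in T$ with $It=0$ and $1-t\in I$, whence $I=I(1-t)\subseteq I^2=0$, which is exactly your ``more concrete'' variant, and it likewise deduces projective dimension $1$ from the projectivity of ${}_TI$ recorded in Lemma~\ref{XXX}. Your primary idempotent packaging ($I=Te$ with $e=e^2\in I^2=0$) is only a mild reformulation of the same square-zero computation.
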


\begin{proof}
Assume the contrary, that is, that the short exact sequence of left $T$-modules (\ref{MMM}) splits. Then there is a left $T$-module morphism $g\colon {}_TT/I\to {}_TT$ that composed with the canonical projection $_TT\to{}_TT/I$ is the identity of $T/I$. Every left $T$-module morphism $g\colon {}_TT/I\to {}_TT$ is the right multiplication by an element $t\in T$ such that $It=0$. Thus (\ref{MMM}) splits if and only if there exists an element $t\in T$ with $It=0$ and $1-t\in I$. These two conditions imply that $I=I(1-t)\subseteq I^2=0$, a contradiction.
\end{proof}

From Lemma~\ref{3.3} we get that $\Ext_T^n(_TT/I,{}_TN)=0$ for every $n\ge 2$ and every left $T$-module $N$. We must now describe the category of left $T$-modules.

\begin{theorem}\cite[Section 1]{HV} The categories $T\lMod$ and $\Morph(R\lMod)$ are equivalent categories.\end{theorem}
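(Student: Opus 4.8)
The plan is to mimic the proof of Theorem~\ref{2.1} verbatim, but dualized from right to left modules: I would construct an equivalence $G'\colon T\lMod\to\Morph(R\lMod)$ together with a quasi-inverse $F'$, using the very same matrix units $e_{11},e_{22},e_{12}$ of $T$, now read as \emph{left} multiplications. Given a left $T$-module $_TM$, i.e.\ a genuine ring homomorphism $\rho\colon T\to\End_\Z(M)$ (a homomorphism now, rather than the antihomomorphism of the right-handed case), the orthogonal idempotents $e_{11},e_{22}$ with $e_{11}+e_{22}=1$ give a decomposition $M=e_{11}M\oplus e_{22}M$ of abelian groups, and each summand is a left $R$-module via the canonical map $R\to T$, $r\mapsto\diag(r,r)$, which commutes with the $e_{ii}$. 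The one place where the construction visibly differs from Theorem~\ref{2.1} is the direction of the induced morphism: from $e_{12}e_{11}=e_{22}e_{12}=0$, $e_{11}e_{12}=e_{12}$ and $e_{12}e_{22}=e_{12}$, left multiplication by $e_{12}$ kills $e_{11}M$ and carries $e_{22}M$ into $e_{11}M$, and it is $R$-linear because $\diag(r,r)e_{12}=e_{12}\diag(r,r)=\left(\begin{smallmatrix}0&r\\0&0\end{smallmatrix}\right)$. Thus $G'$ would send $_TM$ to the left $R$-module morphism $\mu'_M:=\rho(e_{12})|^{e_{11}M}_{e_{22}M}\colon e_{22}M\to e_{11}M$, which runs in the \emph{opposite} direction to the morphism $Me_{11}\to Me_{22}$ of the right-handed case. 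A $T$-linear $\alpha\colon M\to N$ commutes with each $e_{ij}$, hence is diagonal for the decomposition and commutes with $e_{12}$-multiplication, so it restricts to a morphism $(\alpha|_{e_{22}M},\alpha|_{e_{11}M})$ of $\Morph(R\lMod)$; this defines $G'$ on morphisms.

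For the quasi-inverse, given an object $\mu_M\colon M_0\to M_1$ of $\Morph(R\lMod)$ I would set $F'(M):=M_1\oplus M_0$ with the left $T$-action $\left(\begin{smallmatrix}r&s\\0&t\end{smallmatrix}\right)(m_1,m_0):=(rm_1+s\mu_M(m_0),\,tm_0)$. One checks this is a ring homomorphism $T\to\End_\Z(M_1\oplus M_0)$; multiplicativity is exactly where the $R$-linearity $\mu_M(tm_0)=t\mu_M(m_0)$ is used, so that the off-diagonal entry of a product comes out as the cross term $rs'+st'$. On a morphism $u=(u_0,u_1)$ the assignment $(m_1,m_0)\mapsto(u_1m_1,u_0m_0)$ is $T$-linear precisely because of the commutativity relation $u_1\mu_M=\mu_Nu_0$ that defines morphisms in $\Morph(R\lMod)$.

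Finally I would verify that $F'$ and $G'$ are quasi-inverse. The isomorphism $G'F'\cong\mathrm{id}$ is immediate: $e_{11}(F'M)=M_1$, $e_{22}(F'M)=M_0$, and $e_{12}$ acts on $e_{22}(F'M)$ exactly as $\mu_M$. For $F'G'\cong\mathrm{id}$ one reassembles the $T$-action on an arbitrary $_TM$ from the identity $\left(\begin{smallmatrix}r&s\\0&t\end{smallmatrix}\right)=\diag(r,r)e_{11}+\diag(s,s)e_{12}+\diag(t,t)e_{22}$, recovering the formula defining $F'$, just as for the quasi-inverse $G$ of Theorem~\ref{2.1}. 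I do not expect any genuine obstacle here: the whole argument is the left-handed mirror of Theorem~\ref{2.1}, and the only thing demanding care is the bookkeeping forced by the reversal of arrows (and the switch from antihomomorphism to homomorphism). If one prefers to avoid the explicit construction, an alternative is to note $T\lMod\cong\Mod T^{\op}$, check that $T^{\op}$ is isomorphic to the upper-triangular ring $\left(\begin{smallmatrix}R^{\op}&R^{\op}\\0&R^{\op}\end{smallmatrix}\right)$ via $\left(\begin{smallmatrix}a&b\\0&c\end{smallmatrix}\right)\mapsto\left(\begin{smallmatrix}c&b\\0&a\end{smallmatrix}\right)$, and then invoke Theorem~\ref{2.1} over $R^{\op}$ together with $\Mod R^{\op}=R\lMod$.
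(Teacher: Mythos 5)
Your proposal is correct, and it essentially inverts the paper's presentation. The paper's official proof is your closing ``alternative'': it passes to $\Mod T^{\op}$ and reduces to a known triangular-matrix equivalence, the only difference being that the paper uses the transpose identification $T^{\op}\cong\left(\begin{smallmatrix}R^{\op}&0\\ R^{\op}&R^{\op}\end{smallmatrix}\right)$ with a \emph{lower} triangular ring and then cites Haghany--Varadarajan for right modules over that ring, whereas your anti-diagonal swap $\left(\begin{smallmatrix}a&b\\0&c\end{smallmatrix}\right)\mapsto\left(\begin{smallmatrix}c&b\\0&a\end{smallmatrix}\right)$ lands in the upper triangular ring $\left(\begin{smallmatrix}R^{\op}&R^{\op}\\0&R^{\op}\end{smallmatrix}\right)$ and lets you invoke Theorem~\ref{2.1} over $R^{\op}$ directly, making the reduction self-contained; your swap is indeed a ring isomorphism, since in $T^{\op}$ the product of $\left(\begin{smallmatrix}a&b\\0&c\end{smallmatrix}\right)$ with $\left(\begin{smallmatrix}a'&b'\\0&c'\end{smallmatrix}\right)$ is $\left(\begin{smallmatrix}a'a&a'b+b'c\\0&c'c\end{smallmatrix}\right)$, which matches $\left(\begin{smallmatrix}c&b\\0&a\end{smallmatrix}\right)\left(\begin{smallmatrix}c'&b'\\0&a'\end{smallmatrix}\right)$ computed with $R^{\op}$-entries. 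Your main argument, the explicit pair $(F',G')$, coincides with the functors the paper writes down immediately \emph{after} the theorem as a detailed description of the equivalence: the paper's $\lambda$ on $N_1\oplus N_0$ is exactly your action $(m_1,m_0)\mapsto(rm_1+s\mu_M(m_0),\,tm_0)$, and its $\nu_N=\lambda(e_{12})|_{e_{22}N}^{e_{11}N}$ is your $\mu'_M$; the paper, however, never carries out the quasi-inverse verification you sketch, since for it these functors are a by-product of the citation rather than the proof itself. You correctly isolate the two genuine asymmetries with the right-handed case --- the action map is a homomorphism rather than an antihomomorphism, and the induced arrow runs $e_{22}M\to e_{11}M$ rather than $Me_{11}\to Me_{22}$ --- and your identities $e_{12}e_{11}=e_{22}e_{12}=0$, $e_{11}e_{12}=e_{12}e_{22}=e_{12}$ and $\diag(r,r)e_{12}=e_{12}\diag(r,r)$ do exactly the work the paper assigns to $e_{11}e_{12}e_{22}=e_{12}$. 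What each route buys: yours is fully self-contained and actually constructs the functors (which the paper needs anyway, e.g.\ for Proposition~\ref{3.5}), while the paper's is two lines at the cost of an external reference.
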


\begin{proof} The category of left $T$-modules is isomorphic to the category of right $T^{\op}$-modules. Now $$T^{\op}=\left(\begin{array}{cc} R&R\\ 0&R\end{array}\right)^{\op}\cong \left(\begin{array}{cc} R^{\op}&0\\ R^{\op}&R^{\op}\end{array}\right).$$ By \cite[Section~1]{HV}, the category of right modules over the ring $\left(\begin{smallmatrix} R^{\op}&0\\ R^{\op}&R^{\op}\end{smallmatrix}\right)$ is equivalent to the category $\Morph(\Mod R^{\op})$, that is, to the category\linebreak $\Morph(R\lMod)$. \end{proof} 

Let us describe the categorical equivalence of the previous theorem in more detail.

The equivalence $F\colon \Morph(R\lMod)\to{}T\lMod$ is defined as follows. Given any object $N$ in $ \Morph(R\lMod)$, that is a left $R$-module morphism $\nu_N\colon N_0\to N_1$, we consider the abelian group $N_1\oplus N_0$. The left $T$-module structure on $N_1\oplus N_0$ is given by the ring homomorphism $$\lambda\colon T \to\End_\Z(N_1\oplus N_0)\cong \left(\begin{array}{cc}\End_\Z(N_1) & \Hom_\Z(N_0,N_1) \\  \Hom_\Z(N_1,N_0) & \End_\Z(N_0) \end{array}\right)$$ $$\lambda\colon \left(\begin{array}{cc}r&s\\0&t\end{array}\right)\mapsto \left(\begin{array}{cc}\lambda_r&\nu_N\circ\lambda_s \\ 0&\lambda_t\end{array}\right).$$ 

The quasi-inverse $G$ of $F$ is the functor $G\colon T\lMod  \rightarrow \Morph(R\lMod)$ that associates to a left $T$-module $_TN$, that is, to a ring morphism $\lambda\colon T\to\End_\Z(N)$, the morphism $\nu_N\colon e_{22}N\to e_{11}N$, as follows. Left multiplication by $e_{22}$ is an idempotent group morphism $N\to N$, $n \mapsto e_{22}n$. Hence there is a direct-sum decomposition $N=e_{22}N\oplus e_{11}N$ of $N$ as an abelian group. Notice that, via the canonical ring homomorphism $R\to T$, $r\mapsto \left(\begin{smallmatrix}r&0\\0&r \end{smallmatrix}\right)$, every left $T$-module is a left $R$-module in a natural way, so that $e_{22}N$ and $ e_{11}N$ are $R$-submodules of $_RN$, and $N=e_{22}N\oplus e_{11}N$  is a direct-sum decomposition of $_RN$ as a left $R$-module. From the identity $e_{11}e_{12}e_{22}=e_{12}$, we get that $\lambda(e_{11})\circ \lambda(e_{12})\circ \lambda(e_{22})= \lambda(e_{12})$, so that $\lambda(e_{12})(e_{22}N)\subseteq e_{11}N$. Left multiplication $\lambda(e_{12})\colon N\to N$ induces by restriction a left $R$-module morphism $\lambda(e_{12})|_{e_{22}N}^{e_{11}N}\colon e_{22}N\to e_{11}N$. Thus $\nu_N=\lambda(e_{12})|_{e_{22}N}^{e_{11}N}\colon e_{22}N\to e_{11}N$ is the object of $\Morph(R\lMod)$ corresponding to the left $T$-module $_TN$. 
 
 \bigskip
 
Now consider the exact sequence \begin{equation}\label{bjip''} 
\xymatrix{ 0\ar[r]& \Hom(_TT/I,{}_TN)\ar[r]&  \Hom(_TT,{}_TN)\ar[r]&\qquad\qquad\qquad\\ &\qquad\qquad\qquad\ar[r]&\Hom(_TI,{}_TN)\ar[r]&\Ext^1_T(_TT/I,{}_TN)\ar[r] & 0. }\end{equation} 

\begin{proposition}\label{3.5}
{\rm (a)} $\Hom(_TT/I,{}_TN)\cong\ann_NI=N_1\oplus\ker\nu_N$ for every left $T$-module $_TN$, so that the functor $\Hom(_TT/I,-)$ is naturally isomorphic to the product functor $C\times\Ker$.

{\rm (b)} The functor $\Hom_T(_TI_R,-)\colon T \lMod \to R\lMod$ is naturally isomorphic to the functor $C\colon T \lMod \to R\lMod$.

{\rm (c)} The functor $\Ext^1_T(_TT/I,-)\colon T \lMod \to R\lMod$ is naturally isomorphic to the functor $\Coker\colon T \lMod \to R\lMod$.
\end{proposition}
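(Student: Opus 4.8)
The plan is to obtain all three statements at once by applying $\Hom_T(-,{}_TN)$ to the short exact sequence (\ref{MMM}) of $(T,R)$-bimodules and reading off the isomorphisms from the resulting four-term exact sequence (\ref{bjip''}), in complete analogy with the way Proposition~\ref{3.2} is extracted from the $\Tor$-sequence (\ref{bjip'}). Throughout I identify ${}_TN$ with the object $\nu_N\colon e_{22}N\to e_{11}N$ of $\Morph(R\lMod)$ via the equivalence $G$ described just above, so that $C(N)=e_{11}N$, $\Ker(N)=\ker\nu_N$ and $\Coker(N)=e_{11}N/\im\nu_N$, and I use the decomposition $N=e_{11}N\oplus e_{22}N$ coming from $1=e_{11}+e_{22}$. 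The middle term is identified as usual, $\Hom_T({}_TT,{}_TN)\cong N$ by evaluation at $1$.

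I would first prove (b) separately, since it does not use the exact sequence. By Lemma~\ref{XXX}, ${}_TI=Te_{12}$ is cyclic with generator $e_{12}$, so any $g\in\Hom_T({}_TI,{}_TN)$ is determined by $g(e_{12})$; from $e_{12}=e_{11}e_{12}$ one gets $g(e_{12})=e_{11}g(e_{12})\in e_{11}N$, and $g\mapsto g(e_{12})$ is an isomorphism $\Hom_T({}_TI,{}_TN)\cong e_{11}N=C(N)$. The right $R$-module structure on ${}_TI_R$ — the one making $\Hom_T({}_TI_R,-)$ land in $R\lMod$ — corresponds under this map to the $R$-action on $e_{11}N$, because $e_{12}\binom{r\ 0}{0\ r}=\binom{r\ 0}{0\ r}e_{12}$; this gives (b).

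The heart of the argument is an explicit description of the restriction map $\iota^*\colon\Hom_T({}_TT,{}_TN)\to\Hom_T({}_TI,{}_TN)$ induced by the inclusion $\iota\colon{}_TI\hookrightarrow{}_TT$. Under the identifications $\Hom_T({}_TT,{}_TN)\cong N$ (evaluation at $1$) and $\Hom_T({}_TI,{}_TN)\cong e_{11}N$ (evaluation at $e_{12}$), writing $n:=f(1)$ one computes $(\iota^*f)(e_{12})=f(e_{12}\cdot 1)=e_{12}n=\lambda(e_{12})(n)$, so that $\iota^*$ becomes the map $N\to e_{11}N$, $n\mapsto\lambda(e_{12})(n)$. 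Since $e_{12}=e_{12}e_{22}$ and $e_{12}e_{11}=0$, this map factors as the projection $N\to e_{22}N$ followed by $\nu_N=\lambda(e_{12})|_{e_{22}N}^{e_{11}N}$. Hence $\ker\iota^*=\ann_NI=e_{11}N\oplus\ker\nu_N=C(N)\oplus\Ker(N)$ while $\im\iota^*=\im\nu_N$, so $\coker\iota^*=e_{11}N/\im\nu_N=\Coker(N)$. As ${}_TT$ is projective, exactness of (\ref{bjip''}) yields $\Hom_T({}_TT/I,{}_TN)=\ker\iota^*$ and $\Ext^1_T({}_TT/I,{}_TN)=\coker\iota^*$, which are precisely (a) and (c).

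Finally I would check that every identification above is natural in $N$, i.e.\ compatible with $T$-module morphisms ${}_TN\to{}_TN'$ (equivalently, with the corresponding morphisms of objects of $\Morph(R\lMod)$), and that each isomorphism respects the left $R$-module structure carried by the relevant $\Hom$ or $\Ext$ group coming from the right $R$-actions on ${}_TI_R$ and on $T/I$. I expect this bookkeeping to be the only delicate point: one must keep straight which idempotent produces the domain ($e_{22}N$) versus the codomain ($e_{11}N$), and verify that the right $R$-action on $I$ and on $T/I$ induces the asserted left $R$-action on the derived functors. The algebraic content, by contrast, collapses entirely to the single identity $\lambda(e_{12})=\nu_N\circ(\text{projection onto }e_{22}N)$.
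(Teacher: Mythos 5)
Your proof is correct and is exactly the ``standard elementary calculation'' that the paper omits: the four-term exact sequence (\ref{bjip''}) displayed just before the proposition is meant to be evaluated precisely as you do, via the identifications $\Hom_T({}_TT,{}_TN)\cong N$ (evaluation at $1$), $\Hom_T({}_TI,{}_TN)\cong e_{11}N$ (using that ${}_TI=Te_{12}\cong Te_{11}$ from Lemma~\ref{XXX}), and the factorization of the restriction map as $\nu_N$ composed with the projection of $N=e_{11}N\oplus e_{22}N$ onto $e_{22}N$, after which (a) and (c) are $\ker\iota^*$ and $\coker\iota^*$ by projectivity of ${}_TT$. Your bookkeeping also matches the paper's conventions ($N_0=e_{22}N$, $N_1=e_{11}N$, and the right $R$-actions on $I$ and $T/I$ inducing the left $R$-structures on the $\Hom$ and $\Ext$ groups), so nothing essential is missing.
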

 
 We also omit the proof of this proposition, which is a standard elementary calculation.
 
\section{The functor $U$ and the monoid $V(M)$}\label{4}

One of the main aims of this paper is to study the morphisms $\mu_M\colon M_0\to M_1$ whose endomorphism ring $\E M$ is
semilocal. Recall that a ring $S$ is {\em semilocal} if $S/J(S)$ is a semisimple artinian ring, where $J(S)$ denotes the Jacobson radical of the ring $S$. A ring $S$ is semilocal if and only if the dual Goldie dimension $\codim(S_S)$ of the right regular module $S_S$ is finite, if and only if the dual Goldie dimension $\codim(_SS)$ of the left regular module $_S S$ is finite \cite[Proposition 2.43]{Facchini 1998}.
In this case, $\codim(S_S) = \codim(_S S)$ is equal to the Goldie dimension of the semisimple $S$-module $S/J (S)$. 

Among the several classes of modules with a semilocal endomorphism ring, we mention artinian modules, finitely presented modules over a semilocal ring, and finitely generated modules over a semilocal commutative ring. Other classes of modules with semilocal endomorphism rings can be found in \cite[6.2]{Facchini 2012}. The main properties of modules with a semilocal endomorphism ring are the cancellation property, the $n$-th root property, and the fact that the class of modules with a semilocal endomorphism ring is closed under direct summands and finite direct sums. Other properties of modules with a semilocal endomorphism ring can be found in \cite[6.1]{Facchini 2012}. All these properties carry over immediately to objects of $\Morph(\Mod R)$, that is, morphisms of right $R$-modules, with a semilocal endomorphism ring. For instance, every morphism with a semilocal endomorphism ring is the direct sum of a finite number of indecomposable morphisms. Here ``direct sum" means that the morphism has a block decomposition.

We have already said in the previous section that the functor  $$U\colon \Morph(\Mod R)\to \Mod R\times \Mod R,$$ which assigns to every object $M$ of $ \Morph(\Mod R)$ the object $(M_0,M_1)$ of $\Mod R\times \Mod R$, is faithful and local. An immediate corollary of this fact is:

\begin{Lemma} \label{Lem1} For every object $M$ of $\Morph(\Mod R)$, the canonical ring morphism $\varepsilon\colon\E M\to \End(M_0)\times \End(M_1)$, defined by $\varepsilon\colon (u_0,u_1) \mapsto (u_0,u_1)$, is a local morphism. \end{Lemma}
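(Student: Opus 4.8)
The plan is to deduce this lemma directly from the two facts established just before it in the excerpt: that the functor $U\colon \Morph(\Mod R)\to \Mod R\times\Mod R$ is faithful and local, and the general principle that a faithful local additive functor induces a local ring morphism on endomorphism rings. First I would observe that for a fixed object $M$ of $\Morph(\Mod R)$, the functor $U$ restricts to a ring homomorphism on endomorphism rings, namely the map $\varepsilon\colon \E M\to \End_{\Mod R\times\Mod R}(U(M))=\End(M_0)\times\End(M_1)$. This is precisely the action of $U$ on the hom-set $\Hom_{\Morph(\Mod R)}(M,M)$, and since $U$ is additive it respects addition while respecting composition (i.e. ring multiplication) by functoriality; it sends the identity of $M$ to the identity $(1_{M_0},1_{M_1})$. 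So $\varepsilon$ is a genuine ring morphism, and the explicit formula $\varepsilon\colon (u_0,u_1)\mapsto (u_0,u_1)$ recorded in the statement is just the underlying-pair description of $U$ on endomorphisms.

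Next I would verify the defining property of a local ring morphism: for $(u_0,u_1)\in\E M$, if $\varepsilon(u_0,u_1)=(u_0,u_1)$ is invertible in $\End(M_0)\times\End(M_1)$, then $(u_0,u_1)$ is invertible in $\E M$. Invertibility of $(u_0,u_1)$ in the product ring $\End(M_0)\times\End(M_1)$ means exactly that $u_0$ is an automorphism of $M_0$ and $u_1$ is an automorphism of $M_1$, which is the same as saying that $U$ applied to the endomorphism $u=(u_0,u_1)\colon M\to M$ yields an isomorphism $U(u)$ in $\Mod R\times\Mod R$. Because $U$ is a local functor (as recalled in the excerpt), $U(u)$ being an isomorphism forces $u$ itself to be an isomorphism in $\Morph(\Mod R)$, i.e.\ an automorphism of the object $M$, hence a unit of $\E M$. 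This is exactly the local-morphism condition, so $\varepsilon$ is a local ring morphism.

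The only genuine content beyond bookkeeping is the passage from ``$U$ is a local functor'' to ``$\varepsilon$ is a local ring morphism''; the rest is checking that $\varepsilon$ is a well-defined ring homomorphism, which is routine because endomorphism rings and their induced maps under an additive functor are standard. I would therefore present the argument compactly: identify $\varepsilon$ with the restriction of $U$ to $\End(M)$, note it is a ring morphism by additivity and functoriality, and then translate the invertibility hypothesis into the statement that the image $U(u)$ is an isomorphism so that locality of $U$ gives invertibility of $u$. I do not expect any real obstacle here, since the lemma is an immediate corollary of the already-established properties of $U$; the main thing to get right is the dictionary between ``$U(u)$ is an isomorphism in $\Mod R\times\Mod R$'' and ``$\varepsilon(u)$ is a unit of $\End(M_0)\times\End(M_1)$,'' which is transparent because a pair of module morphisms is invertible in the product category precisely when each component is an isomorphism.
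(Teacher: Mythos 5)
Your proposal is correct and takes essentially the same approach as the paper: the paper presents the lemma as an immediate corollary of the fact that the functor $U\colon \Morph(\Mod R)\to \Mod R\times\Mod R$ is local, and its one-line proof is precisely your key observation that a morphism $(u_0,u_1)$ in $\Morph(\Mod R)$ is an isomorphism if and only if both $u_0$ and $u_1$ are right $R$-module isomorphisms. The additional bookkeeping you supply (that $\varepsilon$ is the restriction of $U$ to $\End(M)$, hence a ring morphism, and that units of $\End(M_0)\times\End(M_1)$ are exactly the componentwise automorphisms) is routine and fully consistent with the paper's treatment.
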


\begin{proof} A morphism $(u_0,u_1)$  in the morphism category $\Morph(\Mod R)$ is an isomorphism if and only if
both $u_0$ and $u_1$ are right $R$-module isomorphisms.\end{proof}

\begin{proposition}\label{1}
Let $M$ be an object of $\Morph(\Mod R)$ with 
$\End(M_0)$ and\linebreak $\End(M_1)$ semilocal rings. Then the endomorphism ring $\E M$ of the morphism $M$ in the category $\Morph(\Mod R)$ is semilocal.
\end{proposition}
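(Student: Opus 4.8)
The plan is to exploit Lemma~\ref{Lem1}, which states that the canonical ring morphism $\varepsilon\colon\E M\to \End(M_0)\times \End(M_1)$ is a local morphism, together with the standard fact that semilocality can be transferred along a local morphism into a semilocal ring.

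The key observation is the following general principle about local ring morphisms: if $\varphi\colon S\to S'$ is a local ring morphism and $S'$ is semilocal, then $S$ is semilocal. First I would recall the characterization of semilocal rings in terms of dual Goldie dimension: a ring $S$ is semilocal if and only if $\codim(S_S)<\infty$. The crucial point is that a local morphism $\varphi\colon S\to S'$ induces an inequality on dual Goldie dimensions, namely $\codim(S_S)\le\codim(S'_{S'})$. This is essentially the content of Camps--Dicks type results on semilocal endomorphism rings: being local means $\varphi$ reflects invertibility, and one verifies that the preimage under $\varphi$ of the Jacobson radical of $S'$ contains $J(S)$ in a way that controls the number of maximal right ideals. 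I expect this inequality to be citable from \cite{Camps 1993} or \cite[Section~2]{FP}, where local morphisms are introduced precisely for this purpose.

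With that principle in hand, the proof is short. Since $\End(M_0)$ and $\End(M_1)$ are semilocal by hypothesis, their direct product $\End(M_0)\times \End(M_1)$ is semilocal as well, because $J(S_1\times S_2)=J(S_1)\times J(S_2)$ and hence the quotient by the radical is the product of two semisimple artinian rings, which is again semisimple artinian. By Lemma~\ref{Lem1}, the morphism $\varepsilon\colon\E M\to \End(M_0)\times \End(M_1)$ is local. Applying the principle above to $\varphi=\varepsilon$ and $S'=\End(M_0)\times\End(M_1)$ yields $\codim((\E M)_{\E M})\le\codim(\End(M_0)\times\End(M_1))<\infty$, so $\E M$ is semilocal.

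The main obstacle is making precise, and correctly attributing, the statement that a local morphism into a semilocal ring forces the domain to be semilocal via the dual Goldie dimension inequality. The subtlety is that a local morphism is not assumed surjective, so one cannot simply pull back the decomposition of $S'/J(S')$; instead one must argue that distinct maximal right ideals of $S$ remain ``separated'' after applying $\varphi$, which is exactly where the reflecting-invertibility property is used. Provided the cited result $\codim(S_S)\le\codim(S'_{S'})$ for local morphisms $\varphi\colon S\to S'$ is available in the references, the remaining steps are formal and require no computation.
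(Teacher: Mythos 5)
Your proposal is correct and follows essentially the same route as the paper: the paper's proof also combines Lemma~\ref{Lem1} with the fact that a finite direct product of semilocal rings is semilocal (\cite[(4) on page 7]{Facchini 1998}) and then invokes \cite[Corollary~2]{Camps 1993}, which is precisely the ``local morphism into a semilocal ring forces the domain semilocal'' principle you describe. The dual Goldie dimension inequality $\codim(S_S)\le\codim(S'_{S'})$ for a local morphism $\varphi\colon S\to S'$ that you flag as the potential obstacle is indeed available in \cite{Camps 1993}, so no gap remains.
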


\begin{proof}  By Lemma~\ref{Lem1}, the ring morphism
$$
\varepsilon\colon\E M\to \End(M_0)\times \End(M_1),\qquad\varepsilon\colon (u_0,u_1) \mapsto (u_0,u_1),
$$
is a local morphism. Since $\End(M_0)$ and $\End(M_1)$ are semilocal rings, their direct product $\End(M_0)\times \End(M_1)$ is semilocal \cite[(4) on page 7]{Facchini 1998}, so that $\E M$ is semilocal by \cite[Corollary~2]{Camps 1993}.
\end{proof}

Recall that, for any preadditive category $\Cal A$, the {\em Jacobson radical} $\Cal J_{\Cal A}$ of $\Cal A$ is the ideal of $\Cal A$ consisting, for every pair $(A,B)$ of objects of $\Cal A$, of all morphisms $f
\colon A\to B$ for which $1_A-gf$ has a left inverse for every morphism $g\colon B\to A$ in $\Cal A$. The kernel of any local functor $F\colon\Cal A\to \Cal B$ is contained in the Jacobson radical $\Cal J_{\Cal A}$ of $\Cal A$. 

For example, we will consider in Section~\ref{vil} the full subcategory $\Cal L$ of $\Mod R$ whose objects are all right $R$-modules with a local endomorphism ring. For any two objects $M,N$ of $\Cal L$, the Jacobson radical of $\Cal L$ is defined by $\Cal J_{\Cal L}(M,N)=\{\,f\in\Hom(M,N)\mid f$ is not an isomorphism$\,\}$. The ideal $\Cal J_{\Cal L}$ is a completely prime ideal of the category $\Cal L$ (we will recall  the definition of completely prime ideal in an additive category in Section~\ref{vil}).

\begin{proposition}\label{4.3}
In the embedding $U\colon \Morph(\Mod R)\to \Mod R\times \Mod R,$  if $u=(u_0,u_1)\colon M\to N$ is a morphism in the category $\Morph(\Mod R)$,\linebreak $u_0\in\Cal J_{\Mod R}(M_0,N_0)$ and $u_1\in\Cal J_{\Mod R}(M_1,N_1)$, then $$u=(u_0,u_1)\in\Cal J_{\Morph(\Mod R)}(M,N).$$
\end{proposition}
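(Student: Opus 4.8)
The plan is to verify membership in $\Cal J_{\Morph(\Mod R)}(M,N)$ straight from the definition: I must show that for \emph{every} morphism $g=(g_0,g_1)\colon N\to M$ in $\Morph(\Mod R)$ the endomorphism $1_M-gu$ of $M$ admits a left inverse in $\E M$. Since addition and composition in $\Morph(\Mod R)$ are computed componentwise, $1_M-gu=(w_0,w_1)$ with $w_0:=1_{M_0}-g_0u_0\in\End(M_0)$ and $w_1:=1_{M_1}-g_1u_1\in\End(M_1)$. Now $g_0\colon N_0\to M_0$ is an arbitrary morphism of $\Mod R$ and $u_0\in\Cal J_{\Mod R}(M_0,N_0)$, so the definition of the radical of $\Mod R$ immediately yields that $w_0$ has a left inverse in $\End(M_0)$; symmetrically, using $u_1\in\Cal J_{\Mod R}(M_1,N_1)$, the endomorphism $w_1$ has a left inverse in $\End(M_1)$.

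The heart of the argument is to promote ``$w_0$ has a left inverse'' to ``$w_0$ is an automorphism'', and this is a formal consequence of lying in a categorical Jacobson radical. Writing $k$ for a left inverse of $h:=w_0$ (so $kh=1_{M_0}$) and using $1_{M_0}=h+g_0u_0$, one rewrites $k=1_{M_0}-(-kg_0)u_0$; applying the radical condition a second time to the morphism $-kg_0\colon N_0\to M_0$ shows that $k$ itself has a left inverse $l$. Since $k$ also has the right inverse $h$, the standard identity $l=l(kh)=(lk)h=h$ forces $kh=hk=1_{M_0}$, so $h=w_0\in\Aut(M_0)$. The identical reasoning gives $w_1\in\Aut(M_1)$.

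It remains to assemble the pieces. By the characterization recorded in the proof of Lemma~\ref{Lem1}, a morphism of $\Morph(\Mod R)$ is an isomorphism exactly when both of its components are $R$-module isomorphisms; hence $1_M-gu=(w_0,w_1)$ is an isomorphism in $\Morph(\Mod R)$, its two-sided inverse being $(w_0^{-1},w_1^{-1})$ (the required relation $w_1^{-1}\mu_M=\mu_M w_0^{-1}$ is obtained by inverting $w_1\mu_M=\mu_M w_0$ on the two sides). In particular $1_M-gu$ has a left inverse in $\E M$, and since $g$ was arbitrary we conclude $u=(u_0,u_1)\in\Cal J_{\Morph(\Mod R)}(M,N)$.

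I expect the left-inverse-to-automorphism upgrade to be the one genuine obstacle: the left inverses of $w_0$ and $w_1$ handed over directly by the definition carry no compatibility with $\mu_M$, so they need not satisfy the commuting-square condition and therefore need not combine into a single morphism of $\Morph(\Mod R)$; only full invertibility of the two components lets Lemma~\ref{Lem1} produce an honest inverse in the morphism category. The same result can be packaged functorially---$U$ is local, $U(u)$ lands in $\Cal J_{\Mod R\times\Mod R}$ since the radical of a product category is the product of the radicals, and a local functor reflects radical membership because it reflects isomorphisms---but this reformulation rests on exactly the same upgrade.
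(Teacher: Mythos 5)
Your proof is correct, but it takes a genuinely different (more elementary) route than the paper's. The paper argues functorially in three lines: the functor $U$ and the product $P$ of the two quotient functors $\Mod R\to \Mod R/\Cal J_{\Mod R}$ are local, hence so is the composite $PU$; the general principle stated just before the proposition (the kernel of a local functor is contained in the Jacobson radical) then applies, since the kernel of $PU$ consists exactly of the morphisms $u=(u_0,u_1)$ with $u_0\in\Cal J_{\Mod R}(M_0,N_0)$ and $u_1\in\Cal J_{\Mod R}(M_1,N_1)$. You instead verify the definition of $\Cal J_{\Morph(\Mod R)}(M,N)$ directly: for an arbitrary $g=(g_0,g_1)\colon N\to M$, the components $w_0=1_{M_0}-g_0u_0$ and $w_1=1_{M_1}-g_1u_1$ of $1_M-gu$ are left invertible by the radical condition in $\Mod R$; you then upgrade left invertibility to invertibility by applying the radical condition a second time to $-kg_0$ (your computation $k=1_{M_0}+kg_0u_0$, followed by $l=l(kh)=(lk)h=h$, is correct), and Lemma~\ref{Lem1} assembles the componentwise inverses into an inverse of $1_M-gu$ in $\Morph(\Mod R)$, the compatibility $w_1^{-1}\mu_M=\mu_M w_0^{-1}$ coming from inverting the commuting square. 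Your closing observation is accurate and is the real content here: the left inverses handed over by the definition carry no compatibility with $\mu_M$, so full invertibility of the components is genuinely needed before gluing, and the same upgrade is hidden inside the general principle the paper invokes (for $F$ local with $F(u)=0$ one needs $F(1_M-gu)=1_{F(M)}$ to force $1_M-gu$ invertible, not merely left invertible, via the identical trick). In short: the paper's proof is shorter and situates the result in the local-functor framework used throughout the article, while yours is self-contained, avoids both the general kernel-radical lemma and the locality of the quotient functors, and makes explicit the one step where something could actually go wrong.
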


\begin{proof} Both functors $$U\colon \Morph(\Mod R)\to \Mod R\times \Mod R$$ and $$P\colon \Mod R\times \Mod R\to \Mod R/\Cal J_{\Mod R}\times \Mod R/\Cal J_{\Mod R}$$ are local functors, so that the composite functor $$PU\colon \Morph(\Mod R)\to \Mod R/\Cal J_{\Mod R}\times \Mod R/\Cal J_{\Mod R}$$ is a local functor. Kernels of local functors are contained in the Jacobson radical, and the kernel of the functor $PU$ consists exactly of the morphisms $u=(u_0,u_1)\colon M\to N$ in the category $\Morph(\Mod R)$ with $u_0\in\Cal J_{\Mod R}(M_0,N_0)$ and $u_1\in\Cal J_{\Mod R}(M_1,N_1)$.
\end{proof}

We will see in Example~\ref{5.2} that the implications in Lemma~\ref{Lem1} and Proposition~\ref{4.3} cannot be reversed.

\bigskip

Recall that an element $s$ of a commutative additive monoid $S$ is an {\em order-unit\/} if for every $x\in S$ there exist an integer $n\ge 0$ and an element $y\in S$ such that $x+y=ns$.
We say that {\em idempotents split} in a category $\Cal A$, or that $\Cal A$ {\em has splitting idempotents}, if every idempotent endomorphism in $\Cal A$ has a kernel. For an object $A$ of an additive category $\Cal A$ with splitting idempotents, let $\add(A)$ denote the class of all objects of $\Cal A$ isomorphic to direct summands of $A^n$ for some integer $n\ge 0$. Define an equivalence relation $\sim$ on $\add(A)$ setting, for every $C,C'\in\add(A)$, $C\sim C'$ if $C$ and $C'$ are isomorphic objects of $\Cal A$. Let $\langle C\rangle$ denote the equivalence class modulo $\sim$ of an object $C$ of $\add(A)$ and $V(A):=\add(A)/{}\!\!\sim{}=\{\, \langle C\rangle\mid C\in\add(A)\,\}$ the quotient class modulo $\sim$. Consider the operation $+$ on $V(A)$ defined by $\langle C \rangle + \langle C' \rangle := \langle C\oplus C' \rangle$ for every $C,C' \in \add(A)$. Then the quotient class $V(A)$ turns out to be a (possibly large) commutative monoid with respect to the operation~$+$, and $\langle A\rangle$ is an order-unit in $V(A)$. 

More generally, every category $\Cal A$ has a skeleton $V(\Cal A)$, that is, a full, isomorph\-ism-dense subcategory in which no two distinct objects are isomorphic. It is well known that any two skeletons of $\Cal A$ are isomorphic and are equivalent to $\Cal A$.

The functor $U$ induces a monoid morphism on the monoid $V(M)$ of isomorphism classes of direct summands of finite direct sums of copies of an object $M$ of $\Morph(\Mod R)$. It is the monoid morphism $\Psi:V(M)\to V(M_0)\times V(M_1)$ defined by $\langle C\rangle\mapsto (\langle C_0\rangle,\langle C_1\rangle)$ for every object $C$, that is, $\mu_C\colon C_0\to C_1$, in $\add(M)$.

\begin{theorem}\label{action}
The monoid morphism $\Psi:V(M)\to V(M_0)\times V(M_1)$ is a morphism of monoids with order-unit, is onto, and the inverse image via $\Psi$ of any element $(\langle C_0\rangle,\langle C_1\rangle)$ of the codomain $V(M_0)\times V(M_1)$ is the set of all orbits with respect to the action of the group $\Aut(C_1)\times\Aut(C_0)$ on the set $\Hom_R(C_0,C_1)$.
\end{theorem}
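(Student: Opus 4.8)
The plan is to separate the three assertions: the first is formal, and the last two both reduce to a single realization statement.

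That $\Psi$ is a morphism of monoids with order-unit is routine. It is well defined because isomorphic objects of $\Morph(\Mod R)$ have isomorphic domains and isomorphic codomains, so $(\langle C_0\rangle,\langle C_1\rangle)$ depends only on $\langle C\rangle$; it is additive because coproducts in $\Morph(\Mod R)$ are computed componentwise (Section~\ref{2}), so that the domain and codomain of $C\oplus C'$ are $C_0\oplus C_0'$ and $C_1\oplus C_1'$. Finally $\Psi(\langle M\rangle)=(\langle M_0\rangle,\langle M_1\rangle)$, and since the order-unit of a finite product of monoids with order-unit is the pair of order-units, $\Psi$ carries the order-unit of $V(M)$ to that of $V(M_0)\times V(M_1)$.

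For the remaining assertions I would first isolate the fiber combinatorics. Fix $C_0\in\add(M_0)$ and $C_1\in\add(M_1)$. The objects of $\Morph(\Mod R)$ with domain exactly $C_0$ and codomain exactly $C_1$ are exactly the elements $f\in\Hom_R(C_0,C_1)$, and an isomorphism $(C_0\xrightarrow{f}C_1)\to(C_0\xrightarrow{f'}C_1)$ is a pair $(\alpha,\beta)\in\Aut(C_0)\times\Aut(C_1)$ with $\beta f=f'\alpha$, i.e.\ $f'=\beta f\alpha^{-1}$. Transporting an arbitrary object with domain $\cong C_0$ and codomain $\cong C_1$ along a chosen pair of isomorphisms, one sees that the isomorphism classes of such objects correspond bijectively to the orbits of $\Aut(C_1)\times\Aut(C_0)$ acting on $\Hom_R(C_0,C_1)$ by $(\beta,\alpha)\cdot f=\beta f\alpha^{-1}$. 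Since $\Psi^{-1}(\langle C_0\rangle,\langle C_1\rangle)$ is the set of classes $\langle C\rangle$ with $C\in\add(M)$ whose domain is $\cong C_0$ and whose codomain is $\cong C_1$, it is identified with the set of those orbits $[f]$ for which $(C_0\xrightarrow{f}C_1)\in\add(M)$. Hence both ``$\Psi$ is onto'' and ``the fiber is the whole orbit set'' follow at once from the single statement that $(C_0\xrightarrow{f}C_1)\in\add(M)$ for every $C_0\in\add(M_0)$, every $C_1\in\add(M_1)$ and every $f\in\Hom_R(C_0,C_1)$.

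To prove this realization statement I would, through Theorem~\ref{2.1}, work with summands of $M^n$: the object $(C_0\xrightarrow{f}C_1)$ lies in $\add(M)$ exactly when it is the image of an idempotent $(p,q)\in\E{M^n}$ for some $n$, that is, of idempotents $p\in\End(M_0^n)$ and $q\in\End(M_1^n)$ satisfying $q\mu_{M^n}=\mu_{M^n}p$, with $pM_0^n\cong C_0$, $qM_1^n\cong C_1$, and the restriction of $\mu_{M^n}$ corresponding to $f$. Taking $n$ large enough that $C_0$ and $C_1$ are direct summands of $M_0^n$ and $M_1^n$ produces idempotents realizing the two images separately; the task is to fit them into a single pair meeting the commutation relation $q\mu_{M^n}=\mu_{M^n}p$ while forcing the restricted morphism to equal $f$. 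I expect this compatibility to be the main obstacle, since the plain projections onto $C_0$ and $C_1$ need not commute with $\mu_{M^n}$; the essential idea of the proof should be the construction exploiting how $\mu_M$ interlaces the two sides, and this is the one genuinely non-formal step.
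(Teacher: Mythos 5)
Your formal first paragraph is fine, and your instinct about where the difficulty lies is exactly right --- but the realization statement you postpone is not merely the hard step, it is false, and with it the theorem as literally stated. Take $R=k$ a field and $M$ the identity morphism $1\colon k\to k$. An idempotent $(e_0,e_1)\in\E{M^n}$ satisfies $e_1\mu_{M^n}=\mu_{M^n}e_0$ with $\mu_{M^n}$ the identity of $k^n$, hence $e_0=e_1$, and the corresponding direct summand of $M^n$ is the identity of $e_0(k^n)$. So every object of $\add(M)$ is an isomorphism, and $0\colon k\to k\notin\add(M)$ although $k\in\add(M_0)$, $k\in\add(M_1)$ and $0\in\Hom_k(k,k)$. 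Consequently $\Psi$ here is the map $\N_0\to\N_0\times\N_0$, $d\mapsto(d,d)$, which is not onto, and its fiber over $(\langle k\rangle,\langle k\rangle)$ is a single point, while $\Aut(k)\times\Aut(k)$ has two orbits on $\Hom_k(k,k)$ (zero and nonzero). The same failure is visible in the paper's own example following the theorem: for $\mu_M$ injective but not surjective, $\Psi$ is computed there as $\N_0^2\to\N_0\times\N_0$, $(b,c)\mapsto(c,b+c)$, which misses $(1,0)$. So no idempotent construction of the kind you sketch can exist: if $C\oplus D\cong M^n$ then $\mu_C\oplus\mu_D\cong\mu_{M^n}$, so for instance when $\mu_M$ is injective every $\mu_C$ with $C\in\add(M)$ is injective, and the zero morphism between nonzero modules is never realized.

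For comparison, the paper's proof makes precisely the leap you flagged, with no justification: it asserts that the inverse image of $(\langle C_0\rangle,\langle C_1\rangle)$ ``consists of all morphisms $f\colon C_0\to C_1$ modulo $\sim$,'' i.e., it tacitly assumes that every such $f$, regarded as an object of $\Morph(\Mod R)$, belongs to $\add(M)$; the rest of its argument is your fiber combinatorics (transport along chosen isomorphisms, orbits of $\Aut(C_1)\times\Aut(C_0)$ acting by $(u_1,u_0)g=u_1gu_0^{-1}$), which is correct. What your analysis actually proves, and what is true in general, is this: each fiber of $\Psi$ embeds into the orbit set, its image being the orbits of those $f$ with $(C_0\xrightarrow{f}C_1)\in\add(M)$, and the image of $\Psi$ consists of the pairs $(\langle C_0\rangle,\langle C_1\rangle)$ for which at least one such $f$ exists. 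The theorem becomes correct --- by exactly your second paragraph, with no realization step needed, surjectivity being witnessed by $f=0$ --- if $V(M)$ is replaced by the larger monoid of isomorphism classes of all objects $C$ of $\Morph(\Mod R)$ with $C_0\in\add(M_0)$ and $C_1\in\add(M_1)$; this is evidently what the authors had in mind, and it matches their computation of the onto map $V(\Morph(\Mod k))\cong\N_0^3\to\N_0\times\N_0$, $(a,b,c)\mapsto(a+c,b+c)$, in the very example whose restriction to $V(M)$ contradicts the stated surjectivity.
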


\begin{proof} Let $(\langle C_0\rangle,\langle C_1\rangle)$ be an element in the codomain $V(M_0)\times V(M_1)$. Its inverse image via $\Psi$ consists of all morphisms $f\colon C_0\to C_1$ modulo the equivalence relation $\sim$ induced by isomorphism in $\Morph(\Mod R)$. That is, the inverse image of $(\langle C_0\rangle,\langle C_1\rangle)$ is $\Hom_R(C_0,C_1)/\!\!\sim{}={}\{\,[g]_{\sim}\mid g\colon C_0\to C_1\,\}$, where $[g]_\sim$ indicates the equivalence class of any $g$ modulo $\sim$. Now if $g,g'\colon C_0\to C_1$, then $g\sim g'$ if and only if there exists an isomorphism $u=(u_0,u_1)$ in $\Morph(\Mod R)$, that is, if and only if there exist an automorphism $u_0$ of $C_0$ and an automorphism $u_1$ of $C_1$ with $u_1g=g'u_0$. Thus the direct product $G:=\Aut(C_1)\times\Aut(C_0)$ of the two automorphism groups $\Aut(C_i)$ of the right $R$-modules $C_i$ acts on the set $\Hom_R(C_0,C_1)$ via the action defined, for every $(u_1,u_0)\in \Aut(C_1)\times\Aut(C_0)$ and every $g\in \Hom_R(C_0,C_1)$, by $(u_1,u_0)g:=u_1gu_0^{-1}$. Clearly, two elements $g,g'$ of $\Hom_R(C_0,C_1)$ are in the same orbit if and only if $g\sim g'$.\end{proof}

We have already remarked that $U$ is not isomorphism-reflecting. Equivalently, the monoid morphism $\Psi$ is not injective.

When both right $R$-modules $M_0$ and $M_1$ have a semilocal endomorphism ring, then the three monoids $V(M_0)$, $V(M_1)$ and $V(M)$ are Krull monoids (Proposition~\ref{1} and \cite[Theorem~3.4]{Facchini 2002}). 

\begin{example}{\rm Artinian modules have semilocal endomorphism rings \cite{Camps 1993}. In \cite{FHLV} it was shown that for every integer $n\ge 2$, there exists an artinian module $A_R$ over a suitable ring $R$ which is a direct sum of $t$ indecomposable submodules for every $t=2,3,\dots,n$. Consider the identity morphism $1_A\colon A_R\to A_R$. Then, in the category $\Morph(\Mod R)$, the object $1_A$ is the direct sum of $t$ indecomposable objects of $\Morph(\Mod R)$ for every $t=2,3,\dots,n$.}\end{example}

\begin{example}{\rm 
Let $k$ be a field and $W_0,W_1$ be two non-zero finite dimensional vector spaces over $k$, of dimension $n$ and $m$ respectively. The action of the group $\Aut(W_1)\times\Aut(W_0)=\GL(W_1)\times\GL(W_0)$ on the set $\Hom_k(W_0,W_1)$ considered in Theorem~\ref{action}, is such that two $m\times n$ matrices $A,B\in \Hom_k(W_0,W_1)$ are in the same orbit, that is, are equivalent modulo the relation $\sim$, if and only if they are equivalent, that is, there exist an invertible $n\times n$ matrix $P$ and an invertible $m\times m$ matrix $Q$ such that $B = Q^{-1} A P$. It is well know that two $m\times n$ matrices are equivalent if and only if they have the same rank, and that a canonical representative for the equivalent matrices of a fixed rank $r$ is given by the $m\times n$ matrix $$
\begin{pmatrix}
1 & 0 & 0 & & \cdots & & 0 \\
0 & 1 & 0 & & \cdots & & 0 \\
0 & 0 & \ddots & & & & 0\\
\vdots & & & 1 & & & \vdots \\
 & & & & 0 & & \\
 & & & & & \ddots &  \\
0 & & & \cdots & & & 0
\end{pmatrix},
$$
where the number of $1$'s on the diagonal is equal to $r$. Thus every morphism $\mu_W\colon W_0\to W_1$ is the direct sum in the category $\Morph(\Mod k)$ of the three indecomposable objets $k\to 0$, $0\to k$ and $1\colon k\to k$. This direct-sum decomposition in $\Morph(\Mod k)$ is unique up to isomorphism because the endomorphism rings of the three objects $k\to 0$, $0\to k$ and $1\colon k\to k$ are all isomorphic to $k$ and, therefore, they are three objects with a local endomorphism ring. Notice that any object $\mu_W\colon W_0\to W_1$ in $\Morph(\Mod k)$ has a semilocal endomorphism ring of dual Goldie dimension $\le n+m$, so that all the monoids in the rest of this example will be Krull monoids. Let $\N_0$ indicate the additive monoid of non-negative integers. The monoid $V(\Morph(\smod k))$ is isomorphic to the additive monoid $\N_0^3$, and the monoid morphism induced by the local functor $U=D\times C\colon \Morph(\smod k)\to\smod k\times\smod k$ is the morphism $\N_0^3\to\N_0\times\N_0$, $(a,b,c)\to (a+c,b+c)$.

The three objects $k\to 0$, $0\to k$ and $1\colon k\to k$ of $\Morph(\smod k)$ correspond to the three right $T$-modules $e_{11}T/e_{11}J(T)$, $e_{22}T\cong e_{11}J(T)$ and $e_{11}T$, respectively. Notice that all these three $T$-modules are uniserial (the first two $T$-modules are simple). Thus every finitely generated right $T$-module is a direct sum of uniserial modules.

For any fixed object $M$ of $\Morph(\smod k)$, $\mu_M\colon M_0\to M_1$, the objects in the category $\add(M)$ are always direct sums of the three objects $k\to 0$, $0\to k$ and $1\colon k\to k$, but there are no copies of $k\to 0$ if the morphism $\mu_M$ is injective, no copies of $0\to k$ if the morphism $\mu_M$ is surjective, and no copies of $k\to k$ if the morphism $\mu_M$ is the zero morphism. Therefore, in order to describe the morphism $\Psi:V(M)\to V(M_0)\times V(M_1)$ of monoids with order-unit (see Theorem~\ref{action}), we must distinguish several cases, according to weather $\mu_M$ is injective or not, surjective, the zero morphism, or the finite dimensional vector spaces $M_0$ and $M_1$ are zero or not. For instance, for $\mu_M\colon M_0\to M_1$ injective but not surjective and $M_0\ne 0$, we have that every object in the category $\add(M)$ is a direct sum of the two objects $0\to k$ and $1\colon k\to k$, so that the morphism $V(M)\to V(M_0)\times V(M_1)$ of monoids with order-unit is the morphism $\N_0^2\to \N_0\times\N_0$, $(b,c)\mapsto (c,b+c)$. Suppose that $M_0,M_1$ have dimension $n$ and $m$, respectively. Then the injective, but not surjective, mapping $\mu_M\colon M_0\to M_1$ has rank $n$, and is the direct sum of $n$ copies of $k\to k$ plus $m-n\ge1$ copies of $0\to k$. Thus the monoid morphism $V(M)\cong\N_0^2\to V(M_0)\times V(M_1)\cong \N_0\times\N_0$, induced by the functor $U=C\times D\colon \Morph(\smod k)\to\smod k\times\smod k$, maps the order-unit $(m-n,n)$ of the monoid $V(M)\cong\N_0^2$ to $(n,m)\in\N_0\times\N_0$, and maps the arbitrary element  $(b,c)$ of $V(M)\cong\N_0^2$ to the element $(c,b+c)$ of $V(M_1)\cong \N_0\times\N_0$.}\end{example}

\section{Rings of finite type}\label{5}

Recall that a ring $S$ is said to be {\em of type $n$} if $S/J(S)$ is a direct product of $n$ division rings or, equivalently, if $S$ has exactly $n$ maximal right ideals,  which are all two-sided ideals of $S$ \cite{FPFT}. The ring $S$ is a ring {\em of finite type} if it has type $n$ for some integer $n\ge 1$. If a ring $S$ has finite type, then the type $n$ of
$S$ coincides with the dual Goldie dimension of $S_S$ \cite[Proposition 2.43]{Facchini 1998}.
A ring $S$ has type 1 if and only if it is a local ring, if and only if there is a
local morphism of $S$ into a division ring. More generally, rings of finite type are the rings with a local morphism into the direct product of finitely many division rings \cite[Proposition~2.1]{FPFT}. A completely prime ideal $P$ of a ring $S$ is a proper ideal $P$ of $S$ such that, for every $x,y\in S$, $xy\in P$ implies that either $x \in P$ or $y \in P$. 

\begin{proposition}\label{finite type}
Let $M$ be an object of $\Morph(\Mod R)$. If $\End_R(M_0)$ and $\End_R(M_1)$ are rings of type $m$ and $n$, respectively, then $\E M$ has type $\le m+n$. Moreover, if $I_1,\dots,I_n$ are the $n$ maximal ideals of $\End_R(M_0)$ and $K_1,\dots,K_m$ are the $m$ maximal ideals of $\End_R(M_1)$, then the at most $n+m$ maximal ideals of $\E M$ are among the completely prime ideals $(I_t\times \End_R(M_1))\cap \E M$ (where $t=1,\dots,n$) and $(\End_R(M_0)\times K_q)\cap \E M$ (where $q=1,\dots,m$).
\end{proposition}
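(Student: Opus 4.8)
The plan is to exploit the local ring morphism $\varepsilon\colon\E M\to\End_R(M_0)\times\End_R(M_1)$, $(u_0,u_1)\mapsto(u_0,u_1)$, supplied by Lemma~\ref{Lem1}, together with the characterization of rings of finite type as exactly those admitting a local morphism into a finite product of division rings \cite[Proposition~2.1]{FPFT}. Note that $\varepsilon$ is the inclusion of $\E M$ as a subring of $S':=\End_R(M_0)\times\End_R(M_1)$, so preimages under $\varepsilon$ are just intersections with $\E M$.

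First I would record the structure of $S'$. Since $\End_R(M_0)$ has type $n$ and $\End_R(M_1)$ has type $m$, we may write $\End_R(M_0)/J(\End_R(M_0))\cong D_1\times\cdots\times D_n$ and $\End_R(M_1)/J(\End_R(M_1))\cong E_1\times\cdots\times E_m$ for suitable division rings, the maximal ideals $I_t$ and $K_q$ being the kernels of the projections onto the factors. Consequently the maximal ideals of the product $S'$ are exactly the ideals $I_t\times\End_R(M_1)$ and $\End_R(M_0)\times K_q$, each having a division ring as quotient and hence being completely prime. Composing $\varepsilon$ with the canonical projection $S'\to S'/J(S')\cong D_1\times\cdots\times D_n\times E_1\times\cdots\times E_m$, which is local because an element is invertible if and only if its image modulo the radical is, yields a local morphism $\psi$ of $\E M$ into a product of $n+m$ division rings. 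By \cite[Proposition~2.1]{FPFT} this already shows that $\E M$ is a ring of finite type.

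The heart of the argument is the identification of the maximal ideals. Because the preimage of a completely prime ideal under a ring morphism is again completely prime, the ideals $P_t:=(I_t\times\End_R(M_1))\cap\E M$ (for $t=1,\dots,n$) and $Q_q:=(\End_R(M_0)\times K_q)\cap\E M$ (for $q=1,\dots,m$) are completely prime in $\E M$. Now let $\mathfrak N$ be an arbitrary maximal two-sided ideal of $\E M$; being proper, it consists of non-units. Here I would invoke the locality of $\psi$ in contrapositive form: if $u\in\E M$ is a non-unit, then $\psi(u)$ is not invertible in the product of division rings, so at least one coordinate of $\psi(u)$ vanishes, which says exactly that $u$ lies in one of the ideals $P_t$ or $Q_q$. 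Thus $\mathfrak N$ is contained in the finite union $P_1\cup\cdots\cup P_n\cup Q_1\cup\cdots\cup Q_m$ of completely prime ideals. Applying the prime avoidance lemma gives $\mathfrak N\subseteq P_t$ or $\mathfrak N\subseteq Q_q$ for some index, and since each of these preimages is a proper ideal while $\mathfrak N$ is maximal, the containment is an equality. Hence every maximal ideal of $\E M$ is one of the at most $n+m$ completely prime ideals listed, which simultaneously yields the bound that $\E M$ has type at most $n+m$.

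The step I expect to be the main obstacle is the passage from $\mathfrak N\subseteq\bigcup_t P_t\cup\bigcup_q Q_q$ to $\mathfrak N$ being contained in a single member of the union: this is where one must use a prime avoidance lemma valid in the possibly noncommutative ring $\E M$, which is legitimate precisely because every ideal in the union is completely prime, hence prime. The only other point needing care is verifying the contrapositive of locality, namely that non-invertibility in a finite product of division rings forces the vanishing of some coordinate; this is immediate, as a tuple in $\prod D_t\times\prod E_q$ is invertible exactly when every coordinate is nonzero.
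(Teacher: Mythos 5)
Your proof is correct, and its first half coincides with the paper's: both compose the local embedding $\varepsilon\colon \E M\to\End(M_0)\times\End(M_1)$ of Lemma~\ref{Lem1} with the canonical projections modulo the Jacobson radicals to obtain a local morphism into a direct product of $n+m$ division rings, and both then invoke \cite[Proposition~2.1]{FPFT} for the type bound. Where you genuinely diverge is in locating the maximal ideals: the paper disposes of this step by saying that, inspecting the proof of the cited proposition, the maximal ideals of $\E M$ are among the kernels of the $n+m$ composite projections (which are exactly your $P_t$ and $Q_q$), whereas you reprove it in-house — every proper ideal consists of non-units, the contrapositive of locality places every non-unit in $\bigcup_t P_t\cup\bigcup_q Q_q$, and prime avoidance pins a maximal ideal $\mathfrak N$ inside, hence equal to, a single $P_t$ or $Q_q$. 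The step you flag is indeed the one needing care, and it does work: with the union taken minimal, choose $x_i\in\mathfrak N$ lying outside every $P_j$ with $j\neq i$, and consider $\sum_i\prod_{j\neq i}x_j$; complete primeness guarantees the $i$-th product avoids $P_i$, while two-sided absorption puts every other summand in $P_i$, giving the usual contradiction — so avoidance is valid here precisely because the ideals in the union are completely prime, as you say (for merely prime ideals in a noncommutative ring the argument would need modification). One small point of hygiene: since ``type'' counts maximal right ideals, you should note that after \cite[Proposition~2.1]{FPFT} gives finite type, maximal right ideals of $\E M$ are two-sided and coincide with the maximal two-sided ideals, so running your argument on two-sided maximal ideals suffices (alternatively, the same avoidance argument applies directly to a maximal right ideal, which is also closed under multiplication and consists of non-units). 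In sum, your route buys self-containedness — the reader never has to open the proof of \cite[Proposition~2.1]{FPFT} — at the modest cost of a noncommutative avoidance lemma, while the paper's version is shorter but leans on the internals of its citation.
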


\begin{proof} Let $I_t$ $(t=1,\dots,n)$ be the $n$ maximal ideals of the ring $\End_R(M_0)$ of type~$n$. Then the canonical projection
$$
\End_R(M_0)\to \End_R(M_0)/J( \End_R(M_0))\cong \prod_{t=1}^n \End_R(M_0)/I_t
$$
is a local morphism. Similarly for the canonical projection
$$
\End_R(M_1)\to \End_R(M_1)/J( \End_R(M_1))\cong \prod_{q=1}^m \End_R(M_0)/K_q.
$$
Therefore, there is a canonical local morphism
$$
\E M\to \prod_{t=1}^n \End_R(M_0)/I_t \times \prod_{q=1}^m \End_R(M_0)/K_q
$$
onto the direct product of $n+m$ division rings. By \cite[Proposition~2.1]{FPFT}, the ring $\E M$ is a ring of type $\le n+m$. Looking at the proof of that result, one sees that the maximal ideals of $\E M$ are among the kernels of the $n+m$ canonical projections, which concludes the proof.
\end{proof}

\begin{example}\label{5.2}{\rm 
We have already seen that the inclusion $$\varepsilon\colon\E M\to \End(M_0)\times \End(M_1)$$ is a local morphism. If we identify $\E M$ with its image in $\End(M_0)\times \End(M_1)$, then we have that
$$
(J(\End(M_0))\times J(\End(M_1)))\cap \E M \subseteq J(\E M).
$$
Moreover, if both $\End(M_0)$ and $\End(M_1)$ are rings of finite type, then so is $\E M$. The following example shows that $(1)$ the previous inclusion involving the Jacobson radicals can be proper and $(2)$ it can occur that $E_M$ is a ring of finite type but neither $\End(M_0)$ nor $\End(M_1)$ are.
Let $k$ be any field. Consider the object $\mu_M\colon k^2\rightarrow k^2$ of $\Morph(\Mod k)$ given by $(x,y)\mapsto (x,0)$. Then $\mu_M$ is represented by the $2\times 2$ matrix $$M=\left(\begin{array}{cc}1 & 0 \\ 0 & 0
\end{array}\right).$$
The endomorphism ring $E_M$ of $M$ is given by the set of all pairs of matrices $(A_0,A_1) \in M_2(k)\times M_2(k)$ such that $MA_0=A_1M$. An easy computation shows that $E_M$ consists exactly of all the pairs $(A_0,A_1) \in M_2(k)\times M_2(k)$ of the form
$$
(A_0,A_1)=\left(\left(\begin{array}{cc} u & 0 \\ v & w \end{array}\right),
\left(\begin{array}{cc}
u & x \\
0 & y\\
\end{array}\right)
\right)\quad \mbox{for some } u,v,w,x,y \in k.$$
In particular, $E_M$ is a subring of $\left(\begin{smallmatrix}k & 0 \\ k & k\end{smallmatrix}\right)\times \left(\begin{smallmatrix}k & k \\ 0 & k\end{smallmatrix}\right)$. The nilpotent ideal $\left(\begin{smallmatrix}0 & 0 \\ k & 0\end{smallmatrix}\right)\times \left(\begin{smallmatrix}0 & k \\ 0 & 0\end{smallmatrix}\right)$ of $E_M$ is contained in the Jacobson radical of $E_M$. It follows that $0=E_M\cap (J(M_2(k))\times J(M_2(k)))\subset J(E_M)$.
Moreover, it is easy to see that the ring $E_M$ is a ring of type~3. Its maximal right ideals are the completely prime two-sided ideals
$$
I_1:=\left\lbrace \left(\left(\begin{array}{cc} 0 & 0 \\ v & w \end{array}\right),
\left(\begin{array}{cc}
0 & x \\
0 & y\\
\end{array}\right)
\right)\in E_M\mid v,w,x,y \in k\right\rbrace ,
$$
$$
I_2:=\left\lbrace \left(\left(\begin{array}{cc} u & 0 \\ v & 0 \end{array}\right),
\left(\begin{array}{cc}
u & x \\
0 & y\\
\end{array}\right)
\right)\in E_M\mid u,v,x,y \in k\right\rbrace ,
$$
$$
I_3:=\left\lbrace \left(\left(\begin{array}{cc} u & 0 \\ v & w \end{array}\right),
\left(\begin{array}{cc}
u & x \\
0 & 0\\
\end{array}\right)
\right)\in E_M\mid u,v,w,x \in k\right\rbrace .
$$}
\end{example}

We conclude this section characterizing morphisms with local endomorphism rings.

\begin{theorem}\label{5.3}
Let $M$ be any object of $\Morph(\Mod R)$, $\E M$ its endomorphism ring in $\Morph(\Mod R)$, $\varepsilon\colon \E M\to\End(M_0)\times\End(M_1)$ the inclusion, $\pi_i\colon \End(M_0)\times\End(M_1)\to \End(M_i)$, for $i=0,1$, be the canonical projections and $E_i:=\pi_i\varepsilon(\E M)$. Then the endomorphism ring $E_M$ of the object $M$ is local if and only if one of the following three conditions holds:

{\rm (1)} $M_0=0$ and $\End(M_1)$ is a local ring.

{\rm (2)} $M_1=0$ and $\End(M_0)$ is a local ring.

{\rm (3)} $M_0\ne0$, $M_1\ne0$ and, for every endomorphism $u=(u_0,u_1)\in \E M$:

\noindent {\rm (a)} either $u_0$ or $1-u_0$ is invertible in $E_0$, and

\noindent {\rm (b)} $u_0$ is invertible in $E_0$ if and only if $u_1$  is invertible in $E_1$.
\end{theorem}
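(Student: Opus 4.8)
The plan is to characterize locality of $\E M$ by reducing, via the local morphism $\varepsilon\colon\E M\to\End(M_0)\times\End(M_1)$ of Lemma~\ref{Lem1}, to concrete conditions on the two coordinate rings $E_0$ and $E_1$. Recall that a ring is local exactly when it is nonzero and, for every element $x$, either $x$ or $1-x$ is invertible; equivalently, the non-units form a (two-sided) ideal. I would first dispose of the degenerate cases: if $M_0=0$ then $\End(M_0)=0$, so $\E M$ is isomorphic to its image $E_1$ in $\End(M_1)$ (every endomorphism is determined by its second coordinate), and locality of $\E M$ is equivalent to locality of $\End(M_1)$, giving case~(1); the case $M_1=0$ is symmetric and gives case~(2). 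So the substance is case~(3), where both modules are nonzero.

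For case~(3), assume $M_0\neq0$ and $M_1\neq0$. The forward direction: suppose $\E M$ is local. Given $u=(u_0,u_1)\in\E M$, either $u$ or $1-u$ is invertible in $\E M$; since $\varepsilon$ preserves the pair structure and $\pi_0$ is a ring projection onto $E_0$, applying $\pi_0\varepsilon$ shows that either $u_0$ or $1-u_0$ is invertible in $E_0$, which is (a). For (b), I would argue that an element of a local ring is invertible iff it lies outside the unique maximal ideal, and use that $\varepsilon$ is a \emph{local} morphism together with the fact that an endomorphism $u=(u_0,u_1)$ is an isomorphism in $\Morph(\Mod R)$ iff \emph{both} $u_0$ and $u_1$ are isomorphisms (the content of Lemma~\ref{Lem1}); combined with locality of $\E M$ this forces the invertibility of $u_0$ in $E_0$ and of $u_1$ in $E_1$ to be equivalent conditions. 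The converse direction: assuming (a) and (b), I would show directly that for every $u=(u_0,u_1)\in\E M$, either $u$ or $1-u$ is invertible in $\E M$. By (a), one of $u_0,1-u_0$ is invertible in $E_0$; say $u_0$ is. By (b) applied to $u$, then $u_1$ is invertible in $E_1$. Since $E_i=\pi_i\varepsilon(\E M)$ records invertibility of the coordinates \emph{within the image}, and since $\varepsilon$ is a local morphism, invertibility of both $u_0$ and $u_1$ as elements of the respective $E_i$ yields that $u$ itself is invertible in $\E M$. The case where $1-u_0$ is invertible is handled by applying (b) to $1-u=(1-u_0,1-u_1)$. Finally, $\E M\neq0$ since $1\neq0$ in it, so $\E M$ is local.

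The main obstacle I anticipate is the careful bookkeeping around the rings $E_i=\pi_i\varepsilon(\E M)$ versus the full endomorphism rings $\End(M_i)$: the theorem deliberately phrases conditions (a),(b) in terms of invertibility \emph{in $E_i$} rather than in $\End(M_i)$, because $\varepsilon$ need not be surjective onto either coordinate, and the relevant notion of ``invertible'' is invertibility of the image element inside the projected ring $E_i$. The delicate point is to verify that an endomorphism $u=(u_0,u_1)$ is invertible in $\E M$ precisely when $u_0$ is invertible in $E_0$ and $u_1$ is invertible in $E_1$; this requires checking that the inverse pair, which a priori lives in $E_0\times E_1$, actually arises from a single endomorphism of $M$ in $\Morph(\Mod R)$, i.e.\ that it satisfies the commutativity constraint $u_1\mu_M=\mu_M u_0$ and lies in the image of $\varepsilon$. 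I expect this to follow from the fact that $\varepsilon$ is a local morphism onto its image together with the explicit form of inverses in the morphism category, but it is the step deserving the most care.
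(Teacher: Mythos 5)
Your overall architecture coincides with the paper's: you dispose of the degenerate cases $M_0=0$ and $M_1=0$ in the same way, you get condition (a) by applying the surjection $\pi_0\varepsilon\colon \E M\to E_0$ to ``$u$ or $1-u$ invertible'', and your converse (combining (a) and (b) with Lemma~\ref{Lem1} to conclude that $u$ or $1-u$ is invertible in $\E M$) is correct, in fact slightly more direct than the paper's. The genuine gap is in the forward direction of (b). You assert that Lemma~\ref{Lem1} (that $\varepsilon$ is local, i.e.\ $u$ is an isomorphism iff \emph{both} $u_0$ and $u_1$ are) ``combined with locality of $\E M$'' forces the equivalence, but these ingredients alone cannot yield (b): Lemma~\ref{Lem1} transfers invertibility only when both coordinates are already known to be invertible, whereas (b) needs the one-sided implication that invertibility of the \emph{single} coordinate $u_0$ in $E_0$ already forces $u$ to be invertible in $\E M$. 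That this is not formal from your listed ingredients is shown by the degenerate situation $M_1=0$, $M_0\neq 0$, $\End(M_0)$ local: there $\E M\cong \End(M_0)$ is local and $\varepsilon$ is local, yet $E_1$ is the zero ring, in which every element is invertible, so the equivalence in (b) fails. Hence any correct proof of (b) must use $M_0\neq 0$ and $M_1\neq 0$, and your sketch never invokes these hypotheses in the forward direction.

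The missing idea, which is the heart of the paper's argument, is this: since $M_i\neq 0$, we have $1\neq 0$ in $E_i$, so $E_i$ is a \emph{nonzero} homomorphic image of the local ring $\E M$ under the surjection $\pi_i\varepsilon$; its kernel is therefore a proper ideal, hence contained in the unique maximal ideal $J(\E M)$; and a surjective ring morphism whose kernel lies in the Jacobson radical reflects invertibility (if $\pi_i\varepsilon(u)$ is a unit, choose $v\in\E M$ with $\pi_i\varepsilon(v)$ its inverse; then $1-vu$ and $1-uv$ lie in the kernel, hence in $J(\E M)$, so $vu$ and $uv$ are units and $u$ is invertible). Thus $u_0$ invertible in $E_0$ iff $u$ invertible in $\E M$ iff $u_1$ invertible in $E_1$, which is exactly (b). Note also that your closing paragraph mislocates the delicacy: the two-coordinate statement you flag ($u$ invertible in $\E M$ iff $u_0$ invertible in $E_0$ and $u_1$ invertible in $E_1$) is unproblematic, since the inverse pair $(u_0^{-1},u_1^{-1})$ automatically satisfies the constraint $u_1^{-1}\mu_M=\mu_M u_0^{-1}$, obtained by conjugating $u_1\mu_M=\mu_M u_0$; that is precisely Lemma~\ref{Lem1}. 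The genuinely delicate step is the single-coordinate implication above, and it is exactly the step your proposal leaves unproved.
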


\begin{proof}
Suppose that the endomorphism ring $\E M$ in $\Morph(\Mod R)$ is local. If $M_0=0$, then $\mu_M=0$, and so $\End(M_1)\cong \E M$ is local. Similarly if $M_1=0$. Suppose $M_0\ne 0$ and $M_1\ne 0$.
Notice that $M_0\ne 0$ and $M_1\ne 0$ imply that $1\ne 0$ in both rings $\End(M_0)$ and $\End(M_1)$, hence in both of their subrings $E_0$ and $E_1$. Hence $E_0$ and $E_1$ are non-trival homomorphic images of the local ring $\E M$. If $u=(u_0,u_1)\in \E M$, and $u_0$ is not invertible in $E_0$, then $1-u_0$ is invertible in  $E_0$, because $E_0$ is local. This proves that condition (a) in (3) holds. Moreover, the rings $E_i$ are homomorphic images of the local ring $E_M$, so that the kernel of the surjective morphism $E_M\to E_i$ is contained in the Jacobson radical (which is the maximal ideal) of $E_M$. Hence the image of the maximal ideal of $E_M$ (which is the set of non-invertible elements of $E_M$) is mapped exactly onto the maximal ideal of $E_i$. It follows that $u=(u_0,u_1)$ is an automorphism of $M$ if and only if $u_i$ is invertible $E_i$. Thus $u_0$ is invertible in $E_0$ if and only if $u$ is an automorphism of $M$, if and only if $u_1$ is invertible in $E_1$.

For the converse, it is clear that (1) and (2) imply $\E M$ local. If (3) holds, for every endomorphism $u=(u_0,u_1)\in \E M$ that is not an automorphism, we have that either $u_0$ is not an automorphism of $M_0$ or $u_1$ is not an automorphism of $M_1$. Hence $u_0$ is not invertible in $E_0$ or $u_1$  is not invertible in $E_1$. By (b), the elements $u_0$ and $u_1$ are not invertible in $E_0$ and $E_1$, respectively.  Now $E_0$ is a local ring by (a). Similarly, $E_1$ is a local ring by (a) and (b). It follows that $1-u_0$ and $1-u_1$ are invertible in $E_1$ and $E_2$, respectively.  Thus $1-u$ is invertible in $E_M$, i.e., the ring $E_M$ is local.
\end{proof}

As far as the statement and the proof of Theorem \ref{5.3} are concerned, notice that the ring $\E M$ is a subdirect product of the two rings $E_0$ and $E_1$. Moreover, the embedding $\E M\hookrightarrow E_0\times E_1$ is a local morphism.

\begin{Lemma}
$\E M$ is semilocal if and only if  two rings $E_0$ and $E_1$ are semilocal.
\end{Lemma}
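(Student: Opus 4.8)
The plan is to exploit the local morphism $\varepsilon\colon \E M\hookrightarrow E_0\times E_1$ noted immediately before the statement, together with the general principle that semilocality is detected by finiteness of the dual Goldie dimension of the regular module. Recall from the discussion following Proposition~\ref{1} that a ring $S$ is semilocal precisely when $\codim(S_S)<\infty$, and that this dual Goldie dimension is additive on finite direct products. So the first step is to handle the easy direction: if $E_0$ and $E_1$ are both semilocal, then $E_0\times E_1$ is semilocal (its dual Goldie dimension is the sum of the two finite ones, as in \cite[(4) on page 7]{Facchini 1998}), and since $\varepsilon$ is a local morphism into a semilocal ring, $\E M$ is itself semilocal by \cite[Corollary~2]{Camps 1993}. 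This is exactly the mechanism already used in the proof of Proposition~\ref{1}, now applied to the refined codomain $E_0\times E_1$ rather than $\End(M_0)\times\End(M_1)$.

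For the converse, suppose $\E M$ is semilocal. The key observation is that each $E_i=\pi_i\varepsilon(\E M)$ is a \emph{homomorphic image} of $\E M$: the composite $\pi_i\varepsilon\colon \E M\to E_i$ is by definition a surjective ring morphism. Semilocality passes to homomorphic images, since if $S/J(S)$ is semisimple artinian and $S\twoheadrightarrow S'$, then $S'/J(S')$ is a homomorphic image of $S/J(S)$ (the image of $J(S)$ lands in $J(S')$), hence is again semisimple artinian. Therefore both $E_0$ and $E_1$ are semilocal, which is the desired conclusion.

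The step requiring the most care is making sure the two directions are genuinely symmetric in their use of the factorization $\E M\hookrightarrow E_0\times E_1$. In the forward direction I use that $\varepsilon$ is a local embedding into $E_0\times E_1$ (not merely into $\End(M_0)\times\End(M_1)$); this is legitimate because the remark preceding the lemma already records that $\E M\hookrightarrow E_0\times E_1$ is a local morphism and that $\E M$ is a subdirect product of $E_0$ and $E_1$. The subtle point is that I must invoke the local-morphism criterion with the \emph{smaller} target $E_0\times E_1$, for which semilocality of the factors is exactly the hypothesis; passing instead through $\End(M_0)\times\End(M_1)$ would not suffice, since the full endomorphism rings need not be semilocal even when $\E M$ is (as Example~\ref{5.2} shows). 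Apart from this choice of target, both directions reduce to standard, well-catalogued facts — stability of semilocality under finite products, under local morphisms into a semilocal ring, and under passage to homomorphic images — so no serious obstacle remains once the right codomain is fixed.
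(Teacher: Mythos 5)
Your proof is correct and is essentially the paper's own argument: one direction holds because each $E_i=\pi_i\varepsilon(\E M)$ is a homomorphic image of $\E M$ and semilocality passes to quotients, and the other because $\varepsilon\colon \E M\hookrightarrow E_0\times E_1$ is a local morphism into the semilocal ring $E_0\times E_1$, so \cite[Corollary~2]{Camps 1993} applies. You have merely spelled out the details (finite products of semilocal rings are semilocal, the image of the Jacobson radical under a surjection lands in the radical) that the paper's two-line proof leaves implicit, including the correct observation that the target must be $E_0\times E_1$ rather than $\End(M_0)\times\End(M_1)$.
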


\begin{proof}
$(\Rightarrow)$ Because both the rings $E_i$ are homomorphic images of $\E M$. $(\Leftarrow)$ Because the morphism $\E M\to E_0\times E_1$ is local.). Notice that $\E M$ always has the two two-sided ideals $\ker(\pi_i\varepsilon)$, whose intersection is the zero ideal. By Theorem \ref{5.3}, the ring $\E M$ is local if and only if both the rings $E_0$ and $E_1$ are local and $(\pi_0\varepsilon)^{-1}(J(E_0))=(\pi_1\varepsilon)^{-1}(J(E_1))$.
\end{proof}
We are exactly in the setting of \cite[Abstract]{FP}. We have the Grothendieck category $\Morph(\Mod R)$, the pair of ideals $\ker(D)$ and $\ker(C)$ in the category $\Morph(\Mod R)$ (they are the kernels of the functors $D,C\colon \Morph(\Mod R)\to\Mod R$ defined in the first paragraph of Section~\ref{3}), and we have the canonical functor $$P\colon\Morph(\Mod R)\to \Morph(\Mod R)/\ker(D)\times \Morph(\Mod R)/\ker(C),$$ which is a local functor.
In the terminology of \cite[Section~4]{Rogelio}, the category $$\Morph(\Mod R)$$ is a subdirect product of the two factor categories $\Morph(\Mod R)/\ker(D)$ and $\Morph(\Mod R)/\ker(C)$.

\begin{proposition}
Let $M$ be an object of $\Morph(\Mod R)$ and assume that $\End_R(M_0)$ and $\End_R(M_1)$ are rings of finite type. Then $M$ has a local endomorphism ring if and only if there exists $i=0,1$ such that for every endomorphism $u=(u_0,u_1)\in \E M$ both the following conditions hold:

\noindent {\rm (a)} either $u_i$ or $1-u_i$ is an automorphism of $M_i$, and

\noindent {\rm (b)} if $u_i$ is an automorphism of $M_i$, then $u$ is an automorphism of $M$.

\end{proposition}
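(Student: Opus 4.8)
The plan is to reduce everything to the standard fact that a ring $S$ with $1\neq 0$ is local precisely when, for every $s\in S$, at least one of $s$ and $1-s$ is invertible, together with the bridge supplied by Lemma~\ref{Lem1}: a morphism $u=(u_0,u_1)\in\E M$ is an automorphism of $M$ if and only if both $u_0$ and $u_1$ are automorphisms of $M_0$ and $M_1$. In particular, every unit of $\E M$ has both of its components automorphisms, which is the only place I shall need to pass between $\E M$ and the component rings $\End(M_i)$.

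For the sufficiency direction, I would assume conditions (a) and (b) hold for a fixed index $i$. Given $u=(u_0,u_1)\in\E M$, condition (a) gives that $u_i$ or $1-u_i$ is an automorphism of $M_i$. If $u_i$ is one, condition (b) makes $u$ an automorphism of $M$; if instead $1-u_i$ is one, then since the $i$-th component of $1-u$ is $1-u_i$, condition (b) applied to $1-u$ makes $1-u$ an automorphism of $M$. Hence for every $u$ either $u$ or $1-u$ is a unit of $\E M$, so $\E M$ is local. I expect this direction to be routine and to require neither the finite-type hypothesis nor Theorem~\ref{5.3}.

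For the necessity direction, I would assume $\E M$ is local. The hypothesis that $\End(M_0)$ and $\End(M_1)$ are of finite type forces $M_0\neq 0$ and $M_1\neq 0$, because the endomorphism ring of the zero module is the zero ring, which is not of finite type. By Proposition~\ref{finite type}, $\E M$ is of finite type as well; being local, it has a single maximal ideal $J(\E M)$, and, again by Proposition~\ref{finite type}, this ideal equals either $\{\,u\in\E M\mid u_0\in I_{t_0}\,\}$ for some maximal ideal $I_{t_0}$ of $\End(M_0)$, or $\{\,u\in\E M\mid u_1\in K_{q_0}\,\}$ for some maximal ideal $K_{q_0}$ of $\End(M_1)$. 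I would select $i=0$ in the first case and $i=1$ in the second, so that $J(\E M)=\{\,u\in\E M\mid u_i\in P\,\}$ for a fixed maximal ideal $P$ of $\End(M_i)$. Condition (a) then holds for this $i$ (in fact for both): since $\E M$ is local, $u$ or $1-u$ is a unit of $\E M$, and a unit of $\E M$ has both components automorphisms, so $u_i$ or $1-u_i$ is an automorphism of $M_i$. For condition (b), if $u_i$ is an automorphism of $M_i$ then $u_i$ is a unit of $\End(M_i)$ and so avoids the proper ideal $P$; hence $u\notin J(\E M)$, and locality of $\E M$ forces $u$ to be a unit, i.e.\ an automorphism of $M$.

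The single step that genuinely consumes the finite-type hypothesis, and the one I regard as the crux, is the identification of the unique maximal ideal of the local ring $\E M$ as the preimage, under one of the projections $u\mapsto u_i$, of a maximal ideal of a single component ring $\End(M_i)$; this is precisely the content of Proposition~\ref{finite type}, and it is what pins down the correct index $i$. Without finiteness of type, the maximal ideal of a local $\E M$ need not be governed by a single component, and condition (b) could then fail for both $i=0$ and $i=1$.
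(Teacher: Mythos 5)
Your proof is correct and takes essentially the same route as the paper's: sufficiency via the standard ``$u$ or $1-u$ invertible'' criterion, applying (b) to $u$ and to $1-u$, and necessity by using Proposition~\ref{finite type} to identify the unique maximal ideal $J(\E M)$ of the local ring $\E M$ as the contraction of a single maximal ideal of $\End(M_0)$ or of $\End(M_1)$, which pins down the index $i$. The only (harmless) divergence is cosmetic: the paper disposes of the degenerate case $M_i=0$ as a trivial case inside the proof, while you exclude it up front via the convention that the zero ring is not of finite type.
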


\begin{proof}
Assume that $\E M$ is local. For every $u=(u_0,u_1)\in \E M$, either $u$ or $1-u$ is invertible, so either $u_i$ or $1-u_i$ is an automorphism of $M_i$ for every $i=0,1$.

Now, let $n$ and $m$ be the types of $\End(M_0)$ and $\End(M_1)$, respectively. As a trivial case, we have that if $n=0$ (that is, if $M_0=0$), then $\E M \cong\End(M_1)$ is a local ring and (b) follows. Similarly for $m=0$. Thus we can assume $n,m\geq 1$. Following the notation of Proposition \ref{finite type}, the maximal ideal of $\E M$ is either
\begin{equation}\tag{0}
J(\E M)= (I_t\times \End_R(M_1))\cap \E M \mbox{ for some } t=1,\dots,n,
\end{equation}
or
\begin{equation}\tag{1}
J(\E M)=(\End_R(M_0)\times K_q)\cap \E M\mbox{ for some } q=1,\dots,m.
\end{equation}

Assume that $(0)$ holds and let $u=(u_0,u_1)$ be an element of $\E M$ such that $u_0$ is an automorphism of $M_0$. Then $u \notin J(\E M)$, because $u_0 \notin I_t$ for every $t=1,\dots,n$ (notice that $\bigcup_{t=1}^n I_t$ is the set of all non-invertible elements of $\End(M_0)$). In particular, $u_1$ is not in $\bigcup_{q=1}^m K_q$, that is, $u_1$ is an automorphism of $M_1$. This implies that $u$ is invertible in $\E M$.
In a similar way we can prove that if $(1)$ holds, then, for every $u=(u_0,u_1) \in \E M$, $u_1 \in \Aut(M_1)$ implies $u$ invertible in $\E M$.

Conversely, we want to prove that for every $u=(u_0,u_1)$, either $u$ or $1-u$ is invertible in $\E M$. Assume that there exists $i=0,1$ such that both conditions (a) and (b) hold. By (a), either $u_i$ or $1-u_i$ is invertible, so, by (b), either $u$ or $1-u$ is invertible in $\E M$.
\end{proof}

\section{Morphisms between two modules with local endomorphism rings}\label{vil}

Let $R$ be an arbitrary ring. We now consider the full subcategory $\Cal L$ of $\Mod R$ whose objects are all right $R$-modules with a local endomorphism ring. Let $$\Morph(\Cal L)$$ be the full category  of $\Morph(\Mod R)$ whose objects are all morphisms between two objects of $\Cal L$. The functor $U\colon \Morph(\Mod R)\to \Mod R\times \Mod R$ restricts to a functor $U\colon \Morph(\Cal L)\to \Cal L\times \Cal L$. Hence, for every object $M$ of $\Morph(\Cal L)$, the endomorphism ring of $M$ in the category $\Morph(\Cal L)$ is of type $\le 2$, and has at most two maximal ideals: the completely prime two-sided ideals
$$
I_{M,d}:=\{(u_0,u_1) \in \E M \mid u_0 \mbox{ is not an automorphism of }M_0\},
$$
and
$$
I_{M,c}:=\{(u_0,u_1) \in \E M \mid u_1 \mbox{ is not an automorphism of }M_1\}.
$$

As a consequence, an object $M$ of $\Morph(\Cal L)$ has  a local endomorphism ring if and only if either $I_{M,d}\subseteq I_{M,c}$ or $I_{M,d}\supseteq I_{M,c}$. Therefore, we get the following result.

\begin{Lemma}\label{6.1}
An object $M$ of $\Morph(\Cal L)$ has  a local endomorphism ring if and only if one of the following two conditions holds:

{\rm (1)} For every morphism $(u_0,u_1) \in \E M$, if $u_0$ is an automorphism of $M_0$, then $u_1$ is an automorphism of $M_1$, or

{\rm (2)} For every morphism $(u_0,u_1) \in \E M$, if $u_1$ is an automorphism of $M_1$, then $u_0$ is an automorphism of $M_0$.\end{Lemma}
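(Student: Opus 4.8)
The plan is to treat the statement as a purely logical reformulation of the criterion established in the sentence immediately preceding it, namely that an object $M$ of $\Morph(\Cal L)$ has local endomorphism ring if and only if the two completely prime two-sided ideals $I_{M,d}$ and $I_{M,c}$ are comparable, i.e. $I_{M,d}\subseteq I_{M,c}$ or $I_{M,d}\supseteq I_{M,c}$. Granting this, the only thing left to do is to rewrite each of the two inclusions, by contraposition together with the defining descriptions of $I_{M,d}$ and $I_{M,c}$, as one of conditions (1) and (2). No further ring theory is needed for the lemma itself.

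Concretely, I would unwind the definitions. Recall that $I_{M,c}=\{(u_0,u_1)\in\E M\mid u_1\text{ is not an automorphism of }M_1\}$ and $I_{M,d}=\{(u_0,u_1)\in\E M\mid u_0\text{ is not an automorphism of }M_0\}$. The inclusion $I_{M,c}\subseteq I_{M,d}$ then says exactly that every $(u_0,u_1)\in\E M$ whose $u_1$ is not an automorphism also has $u_0$ not an automorphism; its contrapositive is precisely condition (1), that $u_0\in\Aut(M_0)$ forces $u_1\in\Aut(M_1)$. Symmetrically, $I_{M,d}\subseteq I_{M,c}$ says that $u_0$ not an automorphism forces $u_1$ not an automorphism, and its contrapositive is condition (2). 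Hence $I_{M,d}\supseteq I_{M,c}$ is equivalent to (1) and $I_{M,d}\subseteq I_{M,c}$ is equivalent to (2); combining these two equivalences with the comparability criterion yields the lemma.

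The only thing that genuinely needs care is the bookkeeping of which inclusion matches which condition and stating the contrapositives in the correct direction; there is no real obstacle beyond this. For completeness I would also recall why the underlying criterion holds, since that is where the actual content lies: by Lemma~\ref{Lem1} the functor $U$ is local, so $(u_0,u_1)\in\E M$ is invertible if and only if both $u_0$ and $u_1$ are automorphisms, whence the set of non-units of $\E M$ equals $I_{M,d}\cup I_{M,c}$. As $\E M$ is local precisely when its non-units form an (additive) subgroup, and a union of two subgroups is a subgroup if and only if one contains the other, $\E M$ is local if and only if $I_{M,d}$ and $I_{M,c}$ are comparable. This last observation is what makes the contrapositive translation above not merely formal but exhaustive.
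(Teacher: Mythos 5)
Your proof is correct, and your translation step is exactly how the paper treats the lemma: in the paper, Lemma~\ref{6.1} carries no separate proof, being stated as an immediate consequence (``Therefore, we get the following result'') of the comparability criterion $I_{M,d}\subseteq I_{M,c}$ or $I_{M,c}\subseteq I_{M,d}$, and your contrapositive bookkeeping, matching $I_{M,c}\subseteq I_{M,d}$ with condition (1) and $I_{M,d}\subseteq I_{M,c}$ with condition (2), is the right one. Where you genuinely differ is in the justification of the criterion itself: the paper obtains it from the fact that $\E M$ is a ring of type $\le 2$ whose maximal ideals are among $I_{M,d}$ and $I_{M,c}$, which rests on Proposition~\ref{finite type} (a local morphism into a product of two division rings, via \cite{FPFT}), whereas you argue directly that the non-units of $\E M$ are exactly $I_{M,d}\cup I_{M,c}$ (the observation in Lemma~\ref{Lem1}), that $\E M$ is local iff its non-units are closed under addition, and that a union of two subgroups is a subgroup iff one contains the other. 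Your route is more elementary and self-contained; the paper's buys the general type-$n$ machinery that it reuses, e.g., in Proposition~\ref{enduni}. One point you should make explicit: your union-of-subgroups step needs $I_{M,d}$ and $I_{M,c}$ to be additive subgroups of $\E M$, and this---not the identification of the non-units---is precisely where the hypothesis $M\in\Morph(\Cal L)$ enters: since $\End(M_0)$ is local, $I_{M,d}$ is the preimage of $J(\End(M_0))$ under the ring morphism $(u_0,u_1)\mapsto u_0$, hence a proper two-sided ideal, and similarly for $I_{M,c}$; for arbitrary $M_0$, $M_1$ the set of pairs $(u_0,u_1)$ with $u_0$ not an automorphism need not be closed under addition, and the argument would collapse.
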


The following two examples show that conditions $(1)$ and $(2)$ in Lemma~\ref{6.1} are independent, or, equivalently, that both proper inclusions $I_{M,d}\subset I_{M,c}$ and $ I_{M,c}\subset I_{M,d}$ can occur.

\begin{example}\label{exlocal}{\rm 
Let $\Z_p$ be the localization of $\Z$ at its maximal ideal $(p)$, so that $\Z_p$ is a discrete valuation domain, whose field of fractions is $\Q$. Consider the inclusion $\mu_M\colon \Z_p \hookrightarrow \Q$, viewed as a $\Z_p$-module morphism. Of course, $\End_{\Z_p}(\Z_p)\cong\Z_p$ and $\End_{\Z_p}(\Q)\cong\Q$, which are both local rings. It is immediate to see that the endomorphism ring of $M$ in $\M {\Z_p}$ is $E_M\cong \Z_p$, and that $0=I_{M,c}\subset I_{M,d}=p\Z_p$. }
\end{example}

\begin{example}\label{exlocal'}{\rm 
Let $\Z(p^\infty)$ be the Pr\"ufer group and $\mu_M\colon \Q\to \Z(p^\infty)$ be any group epimorphism, so that $\mu_M$ is an object $M$ in $\M \Z$. It is easily seen that the endomorphism ring $E_M$ of $M$ is canonically isomorphic to the localization $\Z_p$ of $\Z$ at its maximal ideal $(p)$. In this case, we have that $0=I_{M,d}\subset I_{M,c}=p\Z_p$.}
\end{example}

We will say that two objects $M$ and $N$ of $\M R$ belong to 

\noindent (1) {\em the same domain class}, and write $[M]_d=[N]_d$, if there exist morphisms $u\colon M\to N$ and $u'\colon N\to M$ such that $u_0\colon M_0\to N_0$ and $u'_0\colon N_0\to M_0$ are isomorphisms;

\noindent (2) {\em the same codomain class}, and write $[M]_c=[N]_c$, if there exist morphisms $u\colon M\to N$ and $u'\colon N\to M$ such that $u_1\colon M_1\to N_1$ and $u'_1\colon N_1\to M_1$ are isomorphisms.

Recall that a {\em completely prime ideal} $\Cal P$ of an additive category $\Cal C$ consists of a subgroup $\Cal P(A,B)$ of $\Hom_{\Cal C}(A,B)$, for every pair of objects of $\Cal C$, such that: $(1)$ for any objects $A,B,C$ of $\Cal C$, for every $f:A \rightarrow B$ and for every $g:B\rightarrow C$, one has that $gf \in \Cal P(A,C)$ if and only if either $f \in \Cal P(A,B)$ or $g\in \Cal P(B,C)$, and $(2)$ $\Cal P(A,A)$ is a proper subgroup of $\Hom_{\Cal C}(A,A)$ for every object $A$ of $\Cal C$.
If $A, B$ are objects of $\Cal C$, we say that $A$ and $B$ have {\em the same $\Cal P$ class}, and write $[A]_{\Cal P} = [B]_{\Cal P}$, if there exist right $R$-module morphisms $f : A \to B$ and $g: B\to A$ with $f \notin \Cal P(A,B)$ and $g\notin \Cal P(B, A)$  \cite[p.~565]{AlbPav5}.

In $\M {\Cal L}$ we have two completely prime ideals defined, for every pair of objects $\mu_M\colon M_0\rightarrow M_1$ and $\mu_N:N_0\rightarrow N_1$, by
$$
\Cal P_0(M,N):=\{u=(u_0,u_1): M\rightarrow N \mid u_0\mbox{ is not an isomorphism}\}
$$
and
$$
\Cal P_1(M,N):=\{u=(u_0,u_1): M\rightarrow N \mid u_1\mbox{ is not an isomorphism}\}.
$$
It is immediate to see that $M$ and $N$ have the same domain (resp.~codomain) class if and only if they have the same $\Cal P_0$ (resp.~$\Cal P_1$) class. Moreover, for every object $\mu_M:M_0\rightarrow M_1$ of $\M {\Cal L}$, $u \in \E M$ is an automorphism if and only if $u \notin \Cal P_0(M,M)\cup \Cal P_1(M,M)$. Then \cite[Theorem~6.2]{AlbPav5} implies the result that follows.

\begin{theorem}\label{6.1'}
Let $\mu_{M_k}\colon M_{0,k}\rightarrow M_{1,k}$, for $k=1,\dots,r$, and $\mu_{N_\ell} \colon N_{0,\ell}\rightarrow N_{1,\ell}$, for $\ell=1,\dots,s$, be $r+s$ objects in the category $\M {\Cal L}$. Then $\bigoplus_{k=1}^r M_k\cong \bigoplus_{\ell=1}^s N_\ell$ in the category $\M R$ if and only if $r=s$ and there exist two permutations $\varphi_d,\varphi_c$ of $\{1,2,\dots,r\}$ such that $[M_k]_d=[N_{\varphi_d(k)}]_d$ and $[M_k]_c=[N_{\varphi_c(k)}]_c$ for every $k=1,\dots, r$.
\end{theorem}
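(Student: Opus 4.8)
The plan is to obtain the statement as a direct specialization of the abstract weak Krull--Schmidt theorem \cite[Theorem~6.2]{AlbPav5}, so that the entire task reduces to checking that $\Morph(\Cal L)$, together with the two ideals $\Cal P_0$ and $\Cal P_1$, fits its hypotheses. That theorem takes an additive category carrying two completely prime ideals and a family of objects for which ``being an isomorphism'' is equivalent to ``avoiding both ideals'', and concludes that two finite direct sums of such objects are isomorphic exactly when they have equal length and their families of $\Cal P_0$-classes and of $\Cal P_1$-classes coincide, each matched by its own permutation.

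First I would assemble the three facts already established in the text preceding the statement: (i) $\Cal P_0$ and $\Cal P_1$ are completely prime ideals of $\Morph(\Cal L)$; (ii) for every object $M$ of $\Morph(\Cal L)$, an endomorphism $u\in\E M$ is an automorphism if and only if $u\notin\Cal P_0(M,M)\cup\Cal P_1(M,M)$; and (iii) the $\Cal P_0$-class is exactly the domain class $[-]_d$, while the $\Cal P_1$-class is exactly the codomain class $[-]_c$. Facts (i) and (ii) are precisely the hypotheses of \cite[Theorem~6.2]{AlbPav5} applied to the objects $M_1,\dots,M_r,N_1,\dots,N_s$, and fact (iii) translates its conclusion into the language of domain and codomain classes. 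With this dictionary the asserted equivalence, including the two permutations $\varphi_d$ (from the $\Cal P_0$-matching) and $\varphi_c$ (from the $\Cal P_1$-matching), is immediate.

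For the sake of locating where the content lies I would recall, without reproving it, the shape of the two directions packaged by \cite[Theorem~6.2]{AlbPav5}. The necessity direction ($\Rightarrow$) is the softer one: it amounts to the invariance, under an isomorphism $\bigoplus_k M_k\cong\bigoplus_\ell N_\ell$ in $\M R$, of the family of $\Cal P_0$-classes and, separately, of the family of $\Cal P_1$-classes of the summands. Complete primeness of each $\Cal P_i$ together with (ii) is what guarantees that these families are genuine isomorphism invariants, and reading off the two resulting bijections gives $r=s$ and the permutations $\varphi_d$, $\varphi_c$.

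The sufficiency direction ($\Leftarrow$) is where I expect the main obstacle to lie, and it is the heart of \cite[Theorem~6.2]{AlbPav5}: one must manufacture a single isomorphism $\bigoplus_k M_k\cong\bigoplus_\ell N_\ell$ out of two a priori independent matchings, one equating domain classes and the other equating codomain classes. The mechanism is an inductive exchange argument that reconciles $\varphi_d$ and $\varphi_c$ one index at a time, at each step replacing a pair of summands by an isomorphic pair so that the two matchings are brought into agreement on one further index, with criterion (ii) invoked to certify that the morphisms built along the way are honest isomorphisms in $\M R$. Since (i)--(iii) place us squarely inside the hypotheses of the cited theorem, I would invoke this merging step rather than redevelop it.
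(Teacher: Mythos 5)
Your proposal is correct and coincides with the paper's own proof: the text preceding the theorem establishes exactly your facts (i)--(iii) --- that $\Cal P_0$ and $\Cal P_1$ are completely prime ideals of $\M{\Cal L}$, that an endomorphism $u\in\E M$ is an automorphism if and only if $u\notin \Cal P_0(M,M)\cup\Cal P_1(M,M)$, and that the domain and codomain classes agree with the $\Cal P_0$- and $\Cal P_1$-classes --- and the paper then likewise deduces the theorem by a direct appeal to \cite[Theorem~6.2]{AlbPav5}. Your added sketch of how the cited theorem's two directions work is consistent with that reference and does not change the route.
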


Let $n\ge 2$ be an integer. We will now give an example of a semilocal ring $R$ (of type $2n$) with $2n$ pairwise non-isomorphic right $R$-modules
$A_i,\ B_i$ (for $i=1,2,\dots,n$), all of them uniserial with local
endomorphism rings, and $n^2$ right $R$-module morphisms $\mu_{i,j}\colon
A_i\to B_j$ (for $i,j=1,2,\dots,n$), that is, objects $M_{i,j}$ of $\Morph(\Mod R)$  (for $i,j=1,2,\dots,n$), such that $\oplus_{i=1}^n M_{i,i}$ has
$n!$ pairwise non-isomorphic decompositions as a direct sum of $n$ indecomposable
objects of $\Morph(\Mod R)$. More precisely, we will see that the objects $M_{i,j}$ (for $i,j=1,2,\dots,n$) 
are such that:
\begin{enumerate}\item[{\rm (a)}] for every $i,j,k,\ell=1,2,\dots,n$,
$[M_{i,j}]_d=[M_{k,\ell}]_d$ if and only if $i=k$;
\item[{\rm (b)}] for every $i,j,k,\ell=1,2,\dots,n$,
$[M_{i,j}]_c=[M_{k,\ell}]_c$ if and only if $j=\ell$.\end{enumerate}
Therefore $$M_{1,1}\oplus M_{2,2}\oplus \dots\oplus M_{n,n}\cong
M_{\sigma(1),\tau(1)}\oplus M_{\sigma(2),\tau(2)}\oplus \dots\oplus M_{\sigma(n),
\tau(n)}$$ for every pair of permutations $\sigma,\tau$ 
of $\{1,2,\dots,n\}$. Our example is similar to \cite[Example~2.1]{TAMS}.

\begin{example}{\rm 
Let $p,q\in\Z$ be two distinct primes, $\Z_p,\Z_q$ be the localizations of $\Z$ at its maximal ideals $(p)$ and $(q)$, respectively, so that $\Z_p$ and $\Z_q$ are discrete valuation domains contained in $\Q$, and let $\Z_{pq}:=\Z_p\cap \Z_q$ be the subring of $\Q$ consisting of all rational numbers $a/b$, with $a,b\in\Z$ such that $p\nmid b$ and $q\nmid  b$. Thus $\Z_{pq}$ is a subring of $\Q$ that contains $\Z$, is a principal ideal domain,  is the localization of $\Z$ at the multiplicatively closed subset $\Z\setminus(p\Z\cup q\Z)$, is a semilocal ring of type $2$, and all its non-zero ideals are of the form $p^iq^j\Z_{pq}$, with $i,j\ge 0$.

Let $R$ denote the subring of ${\bf M}_n({\bf Q})$ whose elements are $n\times n$-matrices with
entries in ${{\bf Z}_{pq}}$ on and above the diagonal
and entries in ${{pq}{\bf Z}_{pq}}$ 
under the diagonal, that is, $$R=\cmat{cccc} {{\bf Z}_{pq}}&{{\bf Z}_{pq}}&\dots&{{\bf Z}_{pq}}\\
{pq}{{\bf Z}_{pq}}&{{\bf Z}_{pq}}&\dots&{{\bf Z}_{pq}}\\
\vdots& &\ddots&\\
{pq}{{\bf Z}_{pq}}&{pq}{{\bf Z}_{pq}}&\dots&{{\bf Z}_{pq}}
\fmat\subseteq {\bf M}_n({\bf Q}).$$ 

The set $W:=M_{1\times n}(\Q)$ of all $1\times n$ matrices with entries in $\Q$ is a right $R$-module under matrix multiplication. Set $$V_i:=
(\underbrace{q{\bf Z}_{{q}},\dots,q{\bf Z}_{{q}}}_{i-1},
\underbrace{{\bf Z}_{{q}},\dots,{\bf Z}_{{q}}}_{n-i+1}), \quad \mbox{for }\ i=1,2,\dots,n,$$ and $$X_j=(
\underbrace{p{\bf Z}_{{p}},\dots,p{\bf Z}_{p}}_{j-1},
\underbrace{{\bf Z}_{{p}},\dots,{\bf Z}_{{p}}}_{n-j+1}),\quad \mbox{for }\ j=1,2,\dots,m,$$ so that $V_i$ and $X_j$ are $R$-submodules of $W$ and $V_1\supset V_2\supset \dots\supset V_n\supset qV_1\supset
X_1\supset X_2\supset \dots\supset X_n\supset pX_1$. For every $i,j=1,2,\dots,n$, let $\mu_{i,j}\colon V_i\to W/X_j$ be the composite mapping of the inclusion $V_i\to W$ and the canonical projection $W\to W/X_j$, so that $\mu_{i,j}$ can be viewed as an object $M_{i,j}$ of $\Morph(\Mod R)$.

The endomorphism ring of the right $R$-module $V_i$ is isomorphic to the local ring $\Z_q$, because $V_i\cong e_{ii}R_{(q)}$ as an $R$-module, where $R_{(q)}$ denotes the localization of the $\Z_{pq}$-algebra $R$ at the maximal ideal $(q)$ of $\Z_{pq}$, so that $$\End_R(V_i)=\End_{R_{(q)}}(V_i)=\End_{R_{(q)}}( e_{ii}R_{(q)})\cong e_{ii}R_{(q)} e_{ii},$$ which is isomorphic to the localization $\Z_q$ of $\Z$ at its maximal ideal $q\Z$.

Let us prove that the endomorphism ring of the right $R$-module $W/X_j$ is also local. The module $W/X_j$ is isomorphic to $\Z(p^\infty)^n$ (direct sum of $n$ copies of the Pr\"ufer group $\Z(p^\infty))$ as an abelian group, hence is artinian as an abelian group, hence, it is artinian as a right $R$-module. For an artinian right $R$-module $L_R$, the restriction to the socle $\soc(L_R)$ is a local homomorphism $\End(L_R)\to\End(\soc(L_R))$, because every endomorphism of an artinian module $L_R$ which restricted to the socle is an automorphism of the socle, is necessarily an automorphism of $L_R$. As $pq$ is in the Jacobson radical of $R$, $pq$ annihilates all simple right $R$-modules, so that $\soc(W/X_j)$ is contained in $(\Z/p\Z)^n$. Now $(\Z/p\Z)^n$ is a uniserial right $R$-module of finite composition length $n$, whose socle is $(0,\dots,0, \Z/p\Z)$. Thus $\soc (W/X_j)=(0,\dots,0, \Z/p\Z)$, and the endomorphism ring of the socle of $W/X_j$ is isomorphic to the field $\Z/p\Z$ with $p$ elements. Thus there is a surjective local morphism $\End(W/X_j)\to\Z/p\Z$, hence $\End(W/X_j))$ is a local ring.

Let us show that, for every $i,j,k,\ell=1,2,\dots,n$,
$[M_{i,j}]_d=[M_{k,\ell}]_d$ if and only if $i=k$. The ring $R$ has type $2n$, so that it has $2n$ pairwise non-isomorphic simple right $R$-modules, up to isomorphism, $S_1,S_2,\dots, S_n$ (with $p$ elements each) and $T_1,T_2,\dots, T_n$  (with $q$ elements each).

The modules $V_i/qV_i$ are uniserial right $R$-modules of finite composition length $n$ and $q^n$ elements, their composition factors are the $n$ simple right $R$-modules $T_1,T_2,\dots, T_n$  (each with multiplicity one), and with top $V_i/\rad(V_i)$ isomorphic to $T_i$. Similarly, the modules $X_j/pX_j$ are uniserial right $R$-modules of finite composition length $n$ and $p^n$ elements, their composition factors are the $n$ simple right $R$-modules $S_1,S_2,\dots, S_n$  (each with multiplicity one), and with top $X_j/\rad(X_j)$ isomorphic to $S_j$.

It follows that the $2n$ right $R$-modules $V_1,\dots, V_n, W/X_1,\dots,W/X_n$ are pairwise non-isomorphic, that multiplication by $q$ is an isomorphism of $V_i$ onto $qV_i$, and  that multiplication by $p$ is an isomorphism of $W/X_j$ onto $W/pX_j$.

From the fact that the $2n$ right $R$-modules $V_1,\dots, V_n, W/X_1,\dots,W/X_n$ are pairwise non-isomorphic, it follows that, for every $i,j,k,\ell=1,2,\dots,n$,
$[M_{i,j}]_d=[M_{k,\ell}]_d$ implies $i=k$, and $[M_{i,j}]_c=[M_{k,\ell}]_c$ implies $j=\ell$.

Since multiplication by $q$ is an isomorphism of $V_i$ onto $qV_i$, we get, for every $ j\le\ell$, commutative squares $$\xymatrix{
V_i \ar[r]^{\mu_{ij}} \ar[dd]_p^{\cong} &   W/X_j\ar[d]_p^{\cong}  \\ & W/pX_j \ar[d]^{\rm can} \\
V_i\ar[r]_{\mu_{i\ell}}\  &   W/X_\ell
}\qquad\mbox{\rm and}\qquad \xymatrix{
V_i \ar[r]^{\mu_{i\ell}} \ar@{=}[d]&   W/X_\ell\ar[d]^{\rm can} \\
V_i\ar[r]_{\mu_{ij}}\  &   W/X_j
}.$$ This shows that $[M_{i,j}]_d=[M_{i,\ell}]_d$ for every $i,j,\ell$.

The fact that multiplication by $p$ is an isomorphism of $W/X_j$ onto $W/pX_j$ implies that, for every $i\le k$, there are commutative diagrams $$\xymatrix{
V_i \ar[r]^{\mu_{ij}} \ar[d]_q &   W/X_j\ar[d]_q^{\cong}  \\ 
V_k\ar[r]_{\mu_{kj}}&   W/X_j
}\qquad\mbox{\rm and}\qquad \xymatrix{
V_k \ar[r]^{\mu_{kj}} \ar@{^{(}->}[d]&   W/X_j\ar@{=}[d] \\
V_i\ar[r]_{\mu_{ij}}\  &   W/X_j.
}$$ These diagrams show that $[M_{i,j}]_c=[M_{k,j}]_c$ for every $i,j,k$.
}\end{example}

\section{Morphisms between uniserial modules}\label{7}

In this section we want to focus our attention on morphisms between uniserial modules. Recall that a right $R$ module $M$ is {\em uniserial} if the lattice of its submodules is linearly ordered under inclusion.

\begin{proposition}\label{enduni}
Let $\mu_M\colon M_0\rightarrow M_1$ be an object of $\Morph(\Mod R)$ with $M_0$ and $M_1$ non-zero uniserial right $R$-modules. Then $\E M$ has at most four maximal right (left) ideals, which are among the completely prime two-sided ideals
$$
I_0:=\{(u_0,u_1) \in \E M \mid u_0 \mbox{ is not an injective right }R\mbox{-module morphism}\},
$$
$$
I_1:=\{(u_0,u_1) \in \E M \mid u_1 \mbox{ is not an injective right }R\mbox{-module morphism}\},
$$
$$
K_0:=\{(u_0,u_1) \in \E M \mid u_0 \mbox{ is not a surjective right }R\mbox{-module morphism}\},
$$
and
$$
K_1:=\{(u_0,u_1) \in \E M \mid u_1 \mbox{ is not a surjectve right }R\mbox{-module morphism}\}.
$$
\end{proposition}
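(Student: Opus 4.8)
The plan is to deduce the statement from Proposition~\ref{finite type} once the structure of $\End_R(M_0)$ and $\End_R(M_1)$ is understood; the four ideals $I_0,K_0$ (coming from $M_0$) and $I_1,K_1$ (coming from $M_1$) will then appear as the preimages in $\E M$ of the maximal ideals of the two endomorphism rings.

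First I would invoke the well-known description of the endomorphism ring of a non-zero uniserial module $U$ (see \cite{Facchini 1998}): the set of non-injective endomorphisms and the set of non-surjective endomorphisms of $U$ are two completely prime two-sided ideals of $\End_R(U)$, an endomorphism is an automorphism precisely when it lies in neither of them, and hence $\End_R(U)$ is a ring of type $\le 2$ whose maximal right (equivalently left) ideals are among these two completely prime ideals. Applying this separately to the uniserial modules $M_0$ and $M_1$, I obtain that $\End_R(M_0)$ and $\End_R(M_1)$ are rings of type $\le 2$, the two candidate maximal ideals of $\End_R(M_0)$ being $\{u_0\mid u_0\text{ not injective}\}$ and $\{u_0\mid u_0\text{ not surjective}\}$, and similarly for $\End_R(M_1)$.

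Next I would apply Proposition~\ref{finite type} with the types $m,n\le 2$. It yields that $\E M$ has type $\le m+n\le 4$, and that the maximal ideals of $\E M$ lie among the ideals $(\,P\times\End_R(M_1)\,)\cap\E M$ with $P$ a maximal ideal of $\End_R(M_0)$, together with the ideals $(\,\End_R(M_0)\times Q\,)\cap\E M$ with $Q$ a maximal ideal of $\End_R(M_1)$. Substituting the two maximal ideals of each factor described above, these four intersections are exactly $I_0$ and $K_0$ (from the non-injective, resp. non-surjective, endomorphisms of $M_0$) and $I_1$ and $K_1$ (from those of $M_1$). This gives the desired bound of four and identifies the candidate maximal ideals.

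It remains only to observe that each of $I_0,I_1,K_0,K_1$ is a completely prime two-sided ideal of $\E M$: indeed $I_0$ and $K_0$ are the preimages of the completely prime two-sided ideals of non-injective, resp. non-surjective, endomorphisms of $M_0$ under the ring homomorphism $\E M\to\End_R(M_0)$, $(u_0,u_1)\mapsto u_0$, induced by the domain functor $D$, while $I_1$ and $K_1$ are the corresponding preimages under the homomorphism $(u_0,u_1)\mapsto u_1$ induced by the codomain functor $C$; and the preimage of a completely prime (resp. two-sided) ideal under a ring homomorphism is again completely prime (resp. two-sided). The only genuine ingredient is the uniserial structure theorem quoted at the start; everything else is bookkeeping through Proposition~\ref{finite type}, so the main point to verify carefully is this reduction and the matching of notation between the two factors and the four ideals $I_0,I_1,K_0,K_1$.
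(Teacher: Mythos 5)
Your proof is correct and takes essentially the same approach as the paper: the paper's entire proof is the remark that the statement follows immediately from \cite[Theorem~1.2]{TAMS} (the structure theorem for endomorphism rings of non-zero uniserial modules that you quote, also available in \cite{Facchini 1998}) combined with Proposition~\ref{finite type}. You have simply written out the bookkeeping that the paper leaves implicit, namely identifying the intersections $(P\times\End_R(M_1))\cap \E M$ and $(\End_R(M_0)\times Q)\cap \E M$ with $I_0,K_0,I_1,K_1$ and checking complete primeness via preimages under the projections induced by $D$ and $C$.
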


\begin{proof}
It immediately follows from \cite[Theorem~1.2]{TAMS} and Proposition \ref{finite type}.
\end{proof}

We can define four equivalences on $\Ob(\Morph(\Mod R))$ in the spirit of \cite{albfed}. For every pair of morphisms $\mu_M:M_0\rightarrow M_1$ and $\mu_{N}:N_0\rightarrow N_1$, we will write:

\noindent (1) 
$[M]_{0,m}=[N]_{0,m}$ if there exist $(u_0,u_1) \in \Hom(M,N)$ and $(v_0,v_1) \in \Hom(N,M)$ such that both $u_0$ and $v_0$ are injective right $R$-modules morphisms;

\noindent (2) 
$[M]_{1,m}=[N]_{1,m}$ if there exist $(u_0,u_1) \in \Hom(M,N)$ and $(v_0,v_1) \in \Hom(N,M)$ such that both $u_1$ and $v_1$ are injective right $R$-modules morphisms;

\noindent (3) 
$[M]_{0,e}=[N]_{0,e}$ if there exist $(u_0,u_1) \in \Hom(M,N)$ and $(v_0,v_1) \in \Hom(N,M)$ such that both $u_0$ and $v_0$ are surjective right $R$-modules morphisms;

\noindent (4) 
$[M]_{1,e}=[N]_{1,e}$ if there exist $(u_0,u_1) \in \Hom(M,N)$ and $(v_0,v_1) \in \Hom(N,M)$ such that both $u_1$ and $v_1$ are surjective right $R$-modules morphisms.

For morphisms between uniserial modules, we have the following weak form of the Krull-Schmidt Theorem. The proof is very similar to that of \cite[Proposition~4.1]{albfed} and is rather long, so we omit it.

\begin{theorem}\label{7.3}
Let $\mu_{M_j}\colon M_{0,j}\rightarrow M_{1,j}$, for $j=1,\dots,n$, and $\mu_{N_k}\colon N_{0,k}\rightarrow N_{1,k}$, for $k=1,\dots,t$, be $n+t$ morphisms between non-zero uniserial right $R$-modules. Then $\bigoplus_{j=1}^nM_j\cong \bigoplus_{k=1}^t N_k$ in $\Morph(\Mod R)$ if and only if $n=t$ and there exist four permutations $\varphi_{0,m},\varphi_{1,m},\varphi_{0,e}, \varphi_{1,e}$ of $\{1,2,\dots,n\}$ such that $[M_j]_{i,a}=[N_{\varphi_{i,a}(j)}]_{i,a}$ for every $j=1,\dots, n$, $i=0,1$ and $a=m,e$.
\end{theorem}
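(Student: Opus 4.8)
The plan is to realize the four equivalences $[\,\cdot\,]_{0,m},[\,\cdot\,]_{1,m},[\,\cdot\,]_{0,e},[\,\cdot\,]_{1,e}$ as the classes of four completely prime ideals of the morphism category over the uniserial modules, to verify that the endomorphism ring of each object has these four ideals as its maximal ideals covering all non-units, and then to invoke the weak Krull--Schmidt theorem for a finite family of completely prime ideals, exactly as in \cite[Proposition~4.1]{albfed} (the two-ideal case being \cite[Theorem~6.2]{AlbPav5}, which yields Theorem~\ref{6.1'}).

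First I would isolate the two basic completely prime ideals on the category $\Cal U$ of non-zero uniserial right $R$-modules: the ideal $\Cal I$ of non-injective homomorphisms and the ideal $\Cal E$ of non-surjective ones. This is the only point where uniseriality is genuinely used. Since the submodules of a uniserial module are linearly ordered, the kernels of two homomorphisms out of a fixed uniserial module are comparable, so two non-injective maps share a non-zero kernel and their sum is again non-injective; the same comparability shows that $\Cal I$ absorbs arbitrary left and right composition, so $\Cal I$ is a two-sided ideal, and that $gf$ is injective if and only if both $f$ and $g$ are, i.e.\ $\Cal I$ is completely prime. Dually, the images into a fixed uniserial module are comparable, which makes $\Cal E$ a completely prime two-sided ideal. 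Because the domain and codomain functors $D,C\colon\M R\to\Mod R$ of Section~\ref{3} are additive and satisfy $D(vu)=D(v)D(u)$, $C(vu)=C(v)C(u)$, the four preimages
$$\Cal P_{0,m}=D^{-1}(\Cal I),\quad \Cal P_{1,m}=C^{-1}(\Cal I),\quad \Cal P_{0,e}=D^{-1}(\Cal E),\quad \Cal P_{1,e}=C^{-1}(\Cal E)$$
are completely prime ideals of the full subcategory $\Morph(\Cal U)$ of $\M R$ of morphisms between non-zero uniserial modules, each proper because the identity of any object avoids it. On endomorphism rings they restrict to the completely prime ideals $I_0,I_1,K_0,K_1$ of Proposition~\ref{enduni}, and by construction $[M]_{i,a}=[N]_{i,a}$ is exactly the relation $[M]_{\Cal P_{i,a}}=[N]_{\Cal P_{i,a}}$ recalled before Theorem~\ref{6.1'}.

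Next I would check the hypothesis tying automorphisms to these four ideals. For an object $M$ of $\Morph(\Cal U)$, an endomorphism $u=(u_0,u_1)\in\E M$ is an automorphism if and only if $u_0$ and $u_1$ are module isomorphisms (Lemma~\ref{Lem1}), and a homomorphism of uniserial modules is an isomorphism precisely when it is both injective and surjective; hence $u$ is invertible if and only if it lies outside $\Cal P_{0,m}\cup\Cal P_{1,m}\cup\Cal P_{0,e}\cup\Cal P_{1,e}$. Together with Proposition~\ref{enduni} this gives the full input needed downstream: $\E M$ is semilocal of type at most $4$, its maximal ideals are traces of the four completely prime ideals, and those traces cover all non-units of $\E M$.

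With every $M_j,N_k$ meeting these hypotheses, the conclusion is exactly the statement of the weak Krull--Schmidt theorem for the family $\Cal P_{0,m},\Cal P_{1,m},\Cal P_{0,e},\Cal P_{1,e}$, and the proof reduces to quoting it --- or, following \cite[Proposition~4.1]{albfed}, reproving it for four ideals. That $n=t$ I would obtain at once: applying the domain functor $D$ to an isomorphism $\bigoplus_jM_j\cong\bigoplus_kN_k$ gives $\bigoplus_jM_{0,j}\cong\bigoplus_kN_{0,k}$, a direct sum of uniform modules on each side, so equality of Goldie dimensions forces $n=t$. I expect the realization half (``$\Leftarrow$'') to be the main obstacle: the four permutations $\varphi_{0,m},\varphi_{1,m},\varphi_{0,e},\varphi_{1,e}$ generally disagree, so no block-diagonal morphism can serve, and one must manufacture a single isomorphism in $\M R$ that respects all four matchings simultaneously --- injectivity extracted from the two monogeny matchings and surjectivity from the two epigeny matchings, fitted into one commuting square. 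This reconciliation is precisely the long argument modelled on \cite[Proposition~4.1]{albfed}, whose two-ideal prototype \cite[Theorem~6.2]{AlbPav5} already underlies Theorem~\ref{6.1'}.
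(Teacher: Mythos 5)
Your setup matches the paper's route exactly: the paper gives no written proof of Theorem~\ref{7.3}, saying only that the argument is very similar to \cite[Proposition~4.1]{albfed}, and the scaffolding you supply --- complete primeness of the non-injective and non-surjective ideals on non-zero uniserial modules, their pullbacks along $D$ and $C$, the identification with the classes $[\,\cdot\,]_{i,a}$ and with the ideals of Proposition~\ref{enduni}, the characterization of automorphisms as morphisms avoiding the union of the four ideals, and $n=t$ via Goldie dimension --- is all correct. The one caution is that there is no general weak Krull--Schmidt theorem for four completely prime ideals available to quote (the two-ideal case \cite[Theorem~6.2]{AlbPav5} is genuinely special), so of your two options only the second --- redoing the long \cite[Proposition~4.1]{albfed}-style argument, whose realization half you rightly single out as the crux --- is viable, and that is exactly the step the paper itself delegates to the same reference.
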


\end{document}